\documentclass[12pt]{article}
\usepackage[left=2.5cm,right=2.5cm,top=2.5cm,bottom=2.5cm,a4paper]{geometry}

\usepackage[a4paper]{geometry}
\usepackage{graphicx}
\usepackage{microtype}
\usepackage{siunitx}
\usepackage{booktabs}
\usepackage{graphics}
\usepackage{graphicx}
\usepackage{epsfig}
\usepackage{amsmath,amsfonts,amssymb,amsthm}
\usepackage{cleveref}
\usepackage{listings}
\usepackage{paralist}
\usepackage{sectsty}
\usepackage{datetime}
\usepackage{algorithmic,algorithm}
\usepackage[X2,T1]{fontenc} % or OT1 in place of T1

\usepackage{authblk}

\theoremstyle{definition}
\newtheorem{definition}{Definition}[section]
\newtheorem{remark}{Remark}

\theoremstyle{plain}
\newtheorem{theorem}[definition]{Theorem}
\newtheorem{proposition}[definition]{Proposition}

\newtheorem{corollary}[definition]{Corollary}
\newtheorem{lemma}[definition]{Lemma}

\newcommand{\N}{\mathbb Z_{\ge 0}}
\newcommand{\Ap}{\operatorname{Ap}}

\title{Frobenius Numbers Associated with Primitive Pythagorean Quadruples}
\author[1]{WonTae Hwang}
\author[2]{Kyunghwan Song\thanks{khsong@jejunu.ac.kr}}
\affil[1]{Department of Mathematics, and Institute of Pure and Applied Mathematics, Jeonbuk National University, Baekje-daero, Deokjin-gu, Jeonju-si, Jeollabuk-do, 54896 Republic of Korea}
\affil[2]{Department of Mathematics, Jeju National University, 102 Jejudaehakro Jeju, 63243, Republic of Korea}
\date{}

\begin{document}

\maketitle

\begin{abstract}
Let $(a,b,c,d) = \left(2mn, 2mp, m^2 - n^2 - p^2, m^2 + n^2 + p^2\right)$ be a primitive Pythagorean quadruple and let $S=\langle a, b, c, d\rangle$ be the numerical semigroup generated by $a,b,c,$ and $d.$
For convenience, we also let $Q = n^2 + p^2, \delta = \gcd(n,p),$ and $n = \delta n_0$ for some $n_0 \in \mathbb{Z}$. In this paper, we determine the Frobenius number of $S$ and derive an explicit formula in terms of $m$, $n$, and $p$ with the assumption that $m\geq 2Q$ for $\delta = 1$ and $m \geq \frac{2Q}{\delta} - 1$ for the remaining cases. The proof is based on an explicit complete residue system modulo $2mn$,
a normalization procedure for arbitrary semigroup elements, and a lift-orbit
description of boundary representatives.
\end{abstract}

\section{Introduction}
For positive integers $a_1,\ldots,a_k$ satisfying $\gcd(a_1,\ldots,a_k)=1,$ let $S=\langle a_1,\ldots,a_k\rangle$ denote the numerical semigroup generated by
$a_1,\ldots,a_k$. The \emph{Frobenius number} $g(S)$ is the largest integer that
does not belong to $S$. The problem of determining $g(S)$ from a given system of
generators is the classical Frobenius problem (see, for example,
\cite{Ramirez2005,Rosales2009}).

For a nonzero element $x\in S$, the Ap\'ery set of $S$ with respect to $x$ is $\text{Ap}(S,x)=\{s\in S:s-x\notin S\}.$
Equivalently, it consists of the least element of $S$ in each residue class modulo
$x$. In particular,
\begin{equation}\label{eq:apery-frob}
g(S)=\max\operatorname{Ap}(S,x)-x.
\end{equation}
This relation makes the Ap\'ery set one of the standard tools for studying
Frobenius numbers (see \cite{Selmer1977,Ramirez2009,
Rosales2009}).

When $S$ is generated by two relatively prime positive integers $a$ and $b$, its Frobenius number is given by the classical formula $g(\langle a,b\rangle)=ab-a-b$ of Sylvester \cite{Sylvester1883}.
For three or more generators, however, the situation becomes substantially more
complicated. Curtis \cite{Curtis1990} showed that, already for three generators,
the Frobenius number cannot in general be represented by a finite collection of
polynomial formulas. Moreover, the general Frobenius problem is NP-hard
\cite{Ramirez1996}. Nevertheless, several effective methods and explicit
results are known for numerical semigroups of embedding dimension three (see \cite{Rodseth1978,Rosales2011,Robles2012,Tripathi2017}).

The difficulty of the general problem has motivated the study of structured
families for which explicit Frobenius formulas can be obtained. The Ap\'ery set
has proved particularly useful in this direction. For instance, explicit
descriptions of Ap\'ery sets have led to formulas for the Frobenius numbers of
Thabit, repunit, and Mersenne numerical semigroups
\cite{Rosales2015,Rosales2016,Rosales2017}.
The classical Frobenius problem has also been generalized by taking the number
of representations into account. This leads to the $p$-Frobenius problem and
related notions such as the $p$-Ap\'ery set and the $p$-genus, with the classical
problem corresponding to $p=0$. These generalized Frobenius problems have been
studied for a variety of structured families \cite{Komatsu2022,Komatsu2023,Komatsu2024-r1,Komatsu2024-r12,Komatsu2024,Komatsu2025}.

Among structured numerical semigroups arising from Diophantine equations, those associated with Pythagorean triples are particularly relevant to this paper. If
$m>n$ are relatively prime positive integers of different parity,
then $(m^2-n^2,\,2mn,\,m^2+n^2)$ is a primitive Pythagorean triple.  Gil et al. \cite{Gil2015}
determined the Frobenius number of the numerical semigroup
generated by such a triple explicitly.  Further aspects of these semigroups have also been studied.  Mhanna \cite{Mhanna2020} described their
pseudo-Frobenius numbers and Ap\'ery sets, while
Elizeche and Tripathi \cite{Elizeche2020} studied numerical semigroups generated by primitive Pythagorean triplets, including their Frobenius numbers, genera, and pseudo-Frobenius numbers. Furthermore, Komatsu and Sury \cite{Komatsu2023} gave an explicit formula for the $p$-Frobenius number of primitive Pythagorean triples.

Pythagorean quadruples provide a natural higher-dimensional analogue of
Pythagorean triples. More generally, a Pythagorean $k$-tuple is an integer
solution of
$$x_1^2+\cdots+x_{k-1}^2=x_k^2.$$

The parametrization of Pythagorean quadruples and higher Pythagorean tuples has
also been studied from a Diophantine point of view. In particular, Frisch and
Vaserstein \cite{Frisch2012} established polynomial parametrizations
for Pythagorean quadruples and sextuples, together with a related parametrization
result for Pythagorean quintuples.

Motivated by these results, we consider the corresponding Frobenius problem for
primitive Pythagorean quadruples.  More precisely, let $m,n,p$ be
positive integers and consider
$$
(2mn,\,2mp,\,m^2-n^2-p^2,\,m^2+n^2+p^2).
$$
These integers satisfy
$$
(2mn)^2+(2mp)^2+(m^2-n^2-p^2)^2=(m^2+n^2+p^2)^2.
$$
Writing
$Q=n^2+p^2,$ we study the numerical semigroup
$$
S=\langle 2mn,2mp,m^2-Q,m^2+Q\rangle
$$
associated with this quadruple.  The quadruple is primitive exactly
when
$$
\gcd(m,Q)=1\qquad\text{and}\qquad m+n+p\equiv1\pmod 2.
$$
We also let $\delta=\gcd(n,p).$

Our aim is to determine the Frobenius number of $S$ explicitly.
The main result shows that, under the primitivity conditions above,
the formula
$$
g(S)=m^3+2(\delta-1)m^2-Qm+2mp\left(\frac{n}{\delta}-1\right)-2mn
$$
holds whenever $m\ge 2Q$ if $\delta=1,$ and, more generally, $m\ge \frac{2Q}{\delta}-1$ if $\delta>1.$

Two complementary descriptions are used to obtain this result.
We first study the Ap\'ery set with respect to $2mn$.  For the
stronger assumption $m\ge 2Q$, we construct an explicit complete
residue system modulo $2mn$ and associate to each residue class a
family of boundary representatives organized into a lift orbit.
The lift values become monotone away from the central levels, which
reduces the determination of the Ap\'ery representatives to finitely
many candidates.  This leads to an explicit maximal Ap\'ery element
and hence to the above Frobenius formula under $m\ge 2Q$.

Then we use a different argument to increase the range in which the
same formula is valid.  Writing $n=\delta n_0, p=\delta p_0,$
we introduce the auxiliary numerical semigroup $K=\langle m,n,p\rangle.$
Its residue-class structure with respect to $\delta$ gives an explicit
formula for $g(K)$.  A second residue decomposition, now modulo $2m$,
allows the gaps of $S$ to be controlled in terms of those of $K$.
This yields the weaker condition $m\ge \frac{2Q}{\delta}-1$ when $\delta>1$, and establishes the following main result of the paper.
\begin{theorem}
Let $m,n,p\in\mathbb Z_{>0}$, and let
$Q=n^{2}+p^{2}$ and $\delta=\gcd(n,p),$ for convenience. Assume that
$\gcd(m,Q)=1$ and $m+n+p\equiv 1\pmod 2.$ Now, let
$S=\left\langle 2mn,\, 2mp,\, m^{2}-Q,\, m^{2}+Q \right\rangle$ be the numerical semigroup generated by $2mn, 2mp, m^2-Q,$ and $m^2+Q.$ Suppose further that one of the following conditions holds:
\vskip 0.1in
(i) $\delta=1$ and $m \geq 2Q$.
\vskip 0.1in
(ii) $\delta>1$ and $m \geq \frac{2Q}{\delta}-1.$
\vskip 0.1in
Then we have
$$g(S)=m(m^{2}-Q)+ 2m\left((\delta-1)m+\frac{np}{\delta}-n-p\right). $$
\end{theorem}

The paper is organized as follows:  In Section 2, we establish the
primitivity criterion and develop the boundary-state and lift-orbit
framework.  In Section 3, we describe the Ap\'ery set and Frobenius number under the
assumption that $m\ge 2Q$ if $\delta = 1$, and $m\ge \frac{2Q}{\delta} - 1$ if $\delta > 1$.

\section{Primitive Pythagorean quadruples}
Throughout the paper, let $m,n,p$ be positive integers and
let $Q = n^2 + p^2.$

\begin{proposition}
\label{prop:primitive}
The quadruple $\left(2mn, 2mp, m^2 - Q, m^2 + Q\right)$ is primitive if and only if
$$
\gcd(m, Q) = 1 \qquad\text{and}\qquad m + n + p\equiv 1\pmod 2.
$$
\end{proposition}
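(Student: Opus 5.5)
We show both directions by analysing a prime $\ell$ dividing all four entries $2mn$, $2mp$, $m^2-Q$, $m^2+Q$. Here "primitive" means $\gcd(2mn,2mp,m^2-Q,m^2+Q)=1$.

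\emph{Sufficiency.} Assume $\gcd(m,Q)=1$ and $m+n+p$ is odd, and let $\ell$ be a prime dividing all four entries. Then $\ell$ divides $(m^2+Q)+(m^2-Q)=2m^2$ and $(m^2+Q)-(m^2-Q)=2Q$.

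If $\ell$ is odd, then $\ell\mid m$ and $\ell\mid Q$, which contradicts $\gcd(m,Q)=1$.

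If $\ell=2$, then $m^2+Q$ is even. Since $m^2+n^2+p^2\equiv m+n+p \pmod 2$, this contradicts the parity assumption.

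Hence no prime divides all four entries, and the quadruple is primitive.

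\emph{Necessity.} Assume the quadruple is primitive.

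First suppose $m+n+p$ is even. Then $m^2+Q\equiv m+n+p\equiv 0\pmod 2$. Also $m^2-Q\equiv m^2+Q\pmod 2$, so $m^2-Q$ is even too. The first two entries are even trivially, so $2$ divides all four entries, a contradiction. Thus $m+n+p\equiv 1\pmod 2$.

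Next suppose a prime $\ell$ divides both $m$ and $Q$. Then $\ell$ divides $2mn$, $2mp$, $m^2-Q$ and $m^2+Q$, again a contradiction. Hence $\gcd(m,Q)=1$.

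There is no real obstacle here. The only point needing care is to treat $\ell=2$ separately in the sufficiency direction. This is because $\ell\mid 2m^2$ and $\ell\mid 2Q$ only give $\ell\mid m$ and $\ell\mid Q$ when $\ell$ is odd. For $\ell=2$ the parity condition is exactly what excludes it, via $x^2\equiv x\pmod 2$.
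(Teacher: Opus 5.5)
Your proof is correct and follows the paper's plan. The parity condition, via $x^2\equiv x \pmod 2$, rules out $\ell=2$, and an odd common prime is forced to divide $\gcd(m,Q)$. The one difference is that you get $\ell\mid m$ directly from $\ell\mid 2m^2=(m^2-Q)+(m^2+Q)$, whereas the paper uses a case split on $\ell\mid mn$, $\ell\mid mp$; your route actually shows the last two entries are already coprime. Your necessity step also handles every prime, including $2$, uniformly, where the paper only speaks of odd primes.
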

\begin{proof}
%Note that $2mn$ and $2mp$ are even. Moreover, 
Since $x^2\equiv x\pmod 2$ for any integer $x$, we see that
$$
m^2 - Q \equiv m + n + p\pmod 2 ~~~\textrm{and} ~~~ m^2 + Q\equiv m + n + p\pmod 2.
$$
In particular, if $m + n + p$ is even, then the given quadruple cannot be primitive, and hence, to guarantee the primitiveness of the quadruple, it is necessary that
$$
m + n + p\equiv1\pmod2.
$$
Now, let $\ell$ be an odd prime dividing all the four entries of the quadruple. Then since $\ell$ divides both $2mn$ and $2mp,$
%$$
%\ell\mid 2mn,\qquad \ell\mid 2mp,
%$$
we have $\ell\mid mn$ and $\ell\mid mp$. If $\ell\nmid m$, then
$\ell\mid n$ and $\ell\mid p$, and hence, we get
$$
m^2 - Q\equiv m^2\not\equiv 0\pmod\ell,
$$
which is a contradiction. Thus, $\ell\mid m$. Then since $(m^2 + Q) - (m^2 - Q) = 2Q,$ we also have $\ell\mid Q$, whence $\ell \mid \gcd(m,Q).$
%Thus every odd common prime divisor divides $\gcd(m,Q)$.

Conversely, every odd prime dividing $\gcd(m,Q)$ divides all four entries of the quadruple, and hence, the quadruple is primitive exactly under the two stated conditions. 
\vskip 0.1in
This completes the proof.
\end{proof}

Now, for fixed positive integers $n$ and $p$, we call a positive integer $m$ \emph{admissible} if
$$
m^2>n^2+p^2,~~~
\gcd(m,n^2+p^2)=1,~~~\textrm{and}~~~
m+n+p\equiv1\pmod2.
$$
Thus, unless otherwise stated, all assertions involving $m$ are understood to be restricted to admissible values.

In the sequel, we assume that
$$
m\geq 2Q, ~~~ \gcd(m,Q) = 1,~~~\textrm{and}~~~ m + n + p\equiv 1\pmod2.
$$
In particular, note that $m^2 - Q > 0$.
\vskip 0.1in
Let
$$
\delta = \gcd(n,p),\qquad n = \delta n_0,\qquad p = \delta p_0,
$$
with $\gcd(n_0, p_0) = 1$. Since $\delta^2\mid Q$, we may write $Q=\delta^2 Q_0$ for some integer $Q_0$, and since $\gcd(m, Q) = 1$, we have $
\gcd(m, \delta) = 1.$ 
%Write
%$$
%Q = \delta^2Q_0.
%$$
Define
$$
A = 2mn, \qquad D = 2mp,\qquad B = m^2 - Q,\qquad C = m^2 + Q.
$$
We study the numerical semigroup
$$
S=\langle A, D, B, C\rangle.
$$

\begin{lemma}
\label{lem:elementary}
Under the standing assumptions,
\begin{equation}\label{eq:aux1}
Q - p(n_0 - 1) > 0
\end{equation}
and
\begin{equation}\label{eq:aux2}
m^2 - m(\delta - 1) - Q - p(n_0 - 1) > 0.
\end{equation}
Furthermore,
$$
m^2 - m(\delta - 1) + Q - p(n_0 - 1) > 0.
$$
\end{lemma}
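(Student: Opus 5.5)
The plan is to prove the three inequalities in order, each following from the previous one together with the standing assumption $m\ge 2Q$.

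For \eqref{eq:aux1}, write $p(n_0-1) = pn_0 - p$. Since $n_0 = n/\delta \le n$, we have
$$
p(n_0-1) \le p(n-1) = pn - p < pn \le \frac{n^2+p^2}{2} < Q,
$$
using the AM--GM inequality $2np \le n^2+p^2$. Hence $Q - p(n_0-1) > 0$. If $n_0 = 1$, the term $p(n_0-1)$ is simply zero and the claim is immediate.

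For \eqref{eq:aux2}, first bound $\delta$ in terms of $Q$. Since $\delta \mid n$, we have $\delta \le n$, and $n < Q$ because $Q = n^2+p^2 \ge n^2 + 1 > n$. So $\delta - 1 < Q \le m/2$. This gives
$$
m^2 - m(\delta-1) \ge m^2 - m\cdot\frac{m}{2} = \frac{m^2}{2}.
$$
It then suffices to show $\frac{m^2}{2} > Q + p(n_0-1)$. By the first part, $p(n_0-1) < Q$, so $Q + p(n_0-1) < 2Q \le m$. Also $m \le \frac{m^2}{2}$ as soon as $m \ge 2$, which holds because $m \ge 2Q \ge 4$. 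Combining these gives $\frac{m^2}{2} \ge m > Q + p(n_0-1)$, and \eqref{eq:aux2} follows.

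The third inequality follows immediately, since its left-hand side exceeds that of \eqref{eq:aux2} by $2Q > 0$.

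The only point needing any care is bounding $m(\delta-1)$ relative to $m^2$. The crude bounds $\delta \le n < Q \le m/2$ are enough, so there is no real obstacle here. One should still check that the strict inequalities survive the edge cases $n_0 = 1$ and $\delta = 1$; in both, the corresponding term vanishes and the argument goes through unchanged.
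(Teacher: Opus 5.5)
Your proof is correct and follows essentially the same route as the paper. Part (1) uses the same chain $p(n_0-1)<pn\le Q/2<Q$, and part (2) likewise bounds $\delta-1$ and $p(n_0-1)$ in terms of $Q$ before invoking $m\ge 2Q$, with the third inequality following from the second because the left sides differ by $2Q$. The only difference is in the last estimate for (2): the paper reduces to $f(m)=m^2-m(Q-1)-2Q$ and uses monotonicity to get $f(m)\ge f(2Q)=2Q^2>0$, whereas you use $\delta-1<m/2$ to get $m^2-m(\delta-1)\ge m^2/2\ge m\ge 2Q>Q+p(n_0-1)$, which avoids the derivative argument.
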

\begin{proof}
The first inequality follows from
$$
p(n_0 -1) < pn_0 = \frac{np}{\delta}  \leq np  \leq \frac{n^2+p^2}{2} = \frac{Q}{2} < Q.
$$
For the second inequality, note that since $\delta - 1 \leq Q - 1$ and $p(n_0 - 1) < Q$, we have
$$
m^2 - m(\delta - 1) - Q - p(n_0 - 1) > m^2 - m(Q - 1) - 2Q.
$$
It remains to show that the last inequality holds. To this aim, let
$$
f(x) = x^2 - x(Q - 1) - 2Q.
$$
For $x\geq 2Q$, we have
$f'(x) = 2x - (Q - 1) > 0$ so that the function $f$ is increasing on $[2Q,\infty)$. It follows that
$$
f(m) \geq f(2Q) = 2Q^2 > 0,
$$
which proves the last inequality.
\vskip 0.1in
This completes the proof.
\end{proof}

\begin{lemma}
\label{lem:BC}
We have
$$
mB > (m-1)C.
$$
\end{lemma}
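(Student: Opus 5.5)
We will prove the inequality by expanding both sides directly and reducing it to an inequality between $m$ and $Q$. Recall $B=m^2-Q$ and $C=m^2+Q$. Then
$$
mB-(m-1)C = m^3-mQ-\bigl(m^3+mQ-m^2-Q\bigr)= m^2-2mQ+Q .
$$
So the claim is equivalent to $m^2-2mQ+Q>0$. Factoring, this is $m(m-2Q)+Q>0$.

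Under the standing assumption $m\ge 2Q$ we have $m(m-2Q)\ge 0$. Since $Q=n^2+p^2\ge 2>0$, it follows that $m(m-2Q)+Q\ge Q>0$, which gives $mB>(m-1)C$.

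There is no real obstacle here; the only point requiring care is the sign bookkeeping in the expansion of $(m-1)(m^2+Q)=m^3+mQ-m^2-Q$. Note also that the standing hypothesis $m\ge 2Q$ is what the argument uses. If one later needs the lemma under the weaker hypothesis (ii), namely $m\ge \frac{2Q}{\delta}-1$, the same identity shows it still holds whenever $m>Q+\sqrt{Q^2-Q}$. That condition may fail for small $m$, so in that regime I would restrict the statement to the setting $m\ge 2Q$, where it is invoked.
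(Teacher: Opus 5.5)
Your proof is correct and follows the paper's approach: both compute $mB-(m-1)C=m^2-(2m-1)Q$ directly and then use $m\ge 2Q$. The paper bounds this via monotonicity of $x^2-(2x-1)Q$ on $[2Q,\infty)$, with value $Q$ at $x=2Q$; your factorization $m(m-2Q)+Q\ge Q>0$ reaches the same bound a bit more directly.
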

\begin{proof}
By using direct computation, we obtain
$$
mB - (m - 1)C = m^2 - (2m - 1)Q.
$$
The function
$$
g(x) = x^2 - (2x - 1)Q
$$
is increasing for $x\geq 2Q$, and hence, it follows that 
$$g(m) \geq g(2Q) = Q > 0.$$
This completes the proof.
%Therefore the statement holds.
\end{proof}

Now, for $x,y\in\N$, let $k=x+y$ and $z=y-x.$ Then
$x = \frac{k-z}{2}$ and $y = \frac{k+z}{2}$ so that
$$
k\geq |z|~~~\textrm{and}~~~  k\equiv z\pmod2.
$$
Moreover, we have
$$
xB + yC = km^2 + Qz.
$$
Also, let
$$
s = \frac{k-|z|}{2} = \min\{x, y\}.
$$
Then
$$
xB + yC = s(B+C) + |z|m^2 + Qz.
$$
Now, for three integers $v,h,z$ with
$
0\leq v < n_0$ and $ 0\leq h < \delta, $
let
$$
F(v,h,z) = vD + h(B+C) + |z|m^2 + Qz.
$$
Note that the expression for $F(v,h,z)$ is equivalent to
$$
F(v,h,z) = \begin{cases}
vD + h(B+C) + (-z)B, & z\leq 0,\\
vD + h(B+C) + zC, &z > 0.
\end{cases}
$$
Finally, we define the set of base states
$$
\mathcal B = \left\{(v,h,z):0\leq v < n_0, 0\leq h < \delta, -m\leq z < m\right\}.
$$
Note that $|\mathcal{B}|=n_0 \cdot \delta \cdot 2m = 2mn =A.$
\begin{lemma}
\label{lem:complete}
The set
$$
\{F(v,h,z):(v,h,z)\in\mathcal B\}
$$
is a complete residue system modulo $A = 2mn$.
\end{lemma}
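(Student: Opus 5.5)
Since $|\mathcal B| = n_0\cdot\delta\cdot 2m = A$, it suffices to show that the map $(v,h,z)\mapsto F(v,h,z)\bmod A$ is injective on $\mathcal B$. So suppose $F(v,h,z)\equiv F(v',h',z')\pmod{2mn}$ for two base states. I would peel off the coordinates one at a time by reducing modulo successively finer moduli: first $m$, then $2m$, then $2mn$ itself. This determines $z$ first, then $h$, then $v$.

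\emph{Step 1 (reduce mod $m$).} Since $D=2mp$ and $B+C=2m^2$ are divisible by $m$, and $|z|m^2\equiv 0 \pmod m$, we get $F(v,h,z)\equiv Qz\pmod m$. Because $\gcd(m,Q)=1$, this gives $z\equiv z'\pmod m$. As $z,z'\in[-m,m)$, either $z=z'$ or, after swapping, $z\in[-m,0)$ and $z'=z+m\in[0,m)$.

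\emph{Step 2 (reduce mod $2m$ to exclude $z'=z+m$).} Modulo $2m$ the terms $vD$ and $h(B+C)$ vanish, so only the terms $|z|m^2+Qz$ matter. In the second case, with $|z|=-z$ and $|z'|=z+m$, the difference is
\[
(2z+m)m^2+Qm\equiv m^3+Qm = m(m^2+Q)\pmod{2m}.
\]
Now $m^2+Q\equiv m+n+p\equiv 1\pmod 2$ by the primitivity condition (using $x^2\equiv x \pmod 2$, as in Proposition~\ref{prop:primitive}). Hence the difference is $\equiv m\not\equiv 0\pmod{2m}$, a contradiction. Therefore $z=z'$. I expect this parity step to be the only delicate point: it is exactly where the hypothesis $m+n+p\equiv1\pmod 2$ is used to separate $z$ from $z+m$.

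\emph{Step 3 (recover $h$ and $v$).} With $z=z'$, the congruence becomes $2m\bigl((v-v')p+(h-h')m\bigr)\equiv 0\pmod{2mn}$, that is, $(v-v')p+(h-h')m\equiv 0\pmod n$. Reducing mod $\delta$, and using $\delta\mid p$, $\delta \mid n$ and $\gcd(m,\delta)=1$, gives $h\equiv h'\pmod\delta$, so $h=h'$ since $0\le h,h'<\delta$. Then $(v-v')\delta p_0\equiv 0\pmod{\delta n_0}$, so $(v-v')p_0\equiv0\pmod{n_0}$. Since $\gcd(n_0,p_0)=1$, we get $v\equiv v'\pmod{n_0}$, hence $v=v'$.

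Thus the $A$ values $F(v,h,z)$, $(v,h,z)\in\mathcal B$, are pairwise incongruent modulo $A$, and they form a complete residue system modulo $A=2mn$.
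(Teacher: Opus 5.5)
Your proof is correct and follows the paper's argument essentially step for step. You reduce modulo $m$ to get $z\equiv z'\pmod m$, use $m^2+Q\equiv m+n+p\equiv 1\pmod 2$ to rule out $z-z'=\pm m$, and then recover $h$ via $\gcd(m,\delta)=1$ and $v$ via $\gcd(n_0,p_0)=1$; your reduction modulo $2m$ in Step 2 is the same parity argument as the paper's observation that the cofactor of $m$ in the difference must be even.
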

\begin{proof}
Note first that $|\{F(v,h,z):(v,h,z) \in \mathcal{B} \}| = |\mathcal{B}|=A.$ The indexing set has cardinality
$
n_0\cdot \delta\cdot 2m = 2mn = A,
$
and hence, it is sufficient to prove injectivity modulo $A$.
Suppose that
$$
F(v,h,z)\equiv F(v',h',z')\pmod{A}.
$$
Reducing modulo $m$, we obtain $$Q(z-z')\equiv 0\pmod{m}.$$ 
Since $\gcd(m, Q) = 1$, it follows that $z \equiv z^{\prime} \pmod{m}.$
Then since $z, z'\in[-m, m)$, we see that $z - z' \in \{-m, 0, m \}.$
\vskip 0.1in
Assume first that $z - z' = m$. Then
$0\leq z < m, -m\leq z' < 0$, and $|z| - |z'| = 2z - m.$
Thus
\begin{equation}\label{eqn 1}
F(v, h, z) - F(v', h', z') = m\left(2\delta p_0(v - v') + 2m(h - h') + m(2z - m) + Q\right).    
\end{equation}
Since $F(v,h,z)-F(v',h',z')$ is divisible by $A = 2m\delta n_0$, the expression in
parentheses on the right hand side of (\ref{eqn 1}) should be even. However, since
$$
m^2 + Q\equiv m + n + p\equiv 1\pmod{2},
$$
it is absurd. Analogously, we can exclude the case when $z - z' = -m$. Hence $z = z'$.

The congruence now reduces to
$$
2m\delta p_0(v - v') + 2m^2(h - h')\equiv 0\pmod{2m\delta n_0}.
$$
By dividing both sides by $2m$, we obtain
$$
\delta p_0(v - v') + m(h - h') \equiv 0\pmod{\delta n_0}.
$$
It implies that $m(h - h')\equiv 0\pmod{\delta},$ and then since $\gcd(m, \delta) = 1$, it follows that
$
h\equiv h'\pmod{\delta}.
$
Therefore the range of $h - h'$ gives $h = h'$, and then we have
$
p_0(v - v')\equiv 0\pmod{n_0}.
$
Since $\gcd(p_0, n_0) = 1$, we obtain $v = v'$.
\vskip 0.1in
This comletes the proof.
\end{proof}
\begin{lemma}
\label{lem:normalization}
Let $N\in S$. Then there exists a boundary state
$$
(v_0, h, z),\qquad 0\leq v_0 < n_0,\quad 0\leq h < \delta,\quad z\in\mathbb{Z},
$$
such that
$$
F(v_0, h, z)\equiv N\pmod{A}
$$
and
$$
F(v_0, h, z)\leq N.
$$
\end{lemma}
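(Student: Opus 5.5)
Write $N = aA + dD + xB + yC$ with $a,d,x,y\in\N$. The plan is to remove, one step at a time, pieces of this representation that are congruent modulo $A$ to something no larger. Each step keeps the residue class of $N$ modulo $A$ and does not increase the value. At the end the representation must have the form $vD + h(B+C) + |z|m^2 + Qz$ with $0\le v<n_0$, $0\le h<\delta$ and some $z\in\mathbb Z$.

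\textbf{Step 1 (reduce $a$ and $d$).} Drop the $aA$ term outright, since $aA\equiv 0 \pmod A$ and $aA\ge 0$. Next write $d = qn_0 + v_0$ with $0\le v_0<n_0$. Then $n_0 D = 2m\delta n_0 p_0 = A p_0 \equiv 0 \pmod A$, so we may also drop $qn_0D\ge 0$. We now have
\[
N' = v_0D + xB + yC \le N,\qquad N'\equiv N \pmod A .
\]

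\textbf{Step 2 (rewrite $xB+yC$).} Put $s=\min\{x,y\}$ and $z=y-x$. As computed before Lemma~\ref{lem:complete},
\[
xB+yC = s(B+C) + |z|m^2 + Qz .
\]

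\textbf{Step 3 (reduce $s$ modulo $\delta$).} We have $B+C = 2m^2$. Write $s = t\delta + h$ with $0\le h<\delta$. Then
\[
t\delta(B+C) = 2m^2\delta t .
\]
This is divisible by $2m\delta$, but it is not obviously divisible by $A = 2m\delta n_0$. So we need $\delta(B+C)$ to be congruent modulo $A$ to something nonnegative of the admissible shape that is no larger. We have $\delta(B+C)\equiv 2m^2\delta \pmod{2m\delta n_0}$. Since $\gcd(p_0,n_0)=1$, choose $w\in[0,n_0)$ with $wp_0\equiv m \pmod{n_0}$. Then $wD = 2m\delta p_0 w \equiv 2m\delta\cdot m = \delta(B+C) \pmod A$, and
\[
wD\le (n_0-1)D = 2m\delta p_0(n_0-1) < 2m^2\delta .
\]
The last inequality holds because $p_0(n_0-1) < Q \le m$; this uses Lemma~\ref{lem:elementary}-type bounds, and only $p(n_0-1)<Q$ is needed.

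So replace each block $\delta(B+C)$ by $wD$, which decreases the value and keeps the residue. Then reduce the new $D$-coefficient modulo $n_0$ as in Step~1. Iterating on $t$ terminates, since the value strictly decreases. The result has $D$-coefficient in $[0,n_0)$ and $(B+C)$-coefficient $h<\delta$.

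\textbf{Step 4 (conclude).} The final expression is exactly $F(v_0,h,z)$ for the same $z$, because $z$ was never touched. It satisfies $F(v_0,h,z)\equiv N \pmod A$ and $F(v_0,h,z)\le N$.

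The main obstacle is Step~3: $\delta(B+C)$ is not itself a multiple of $A$, so the exchange with $D$ must be checked to be value-decreasing. This rests on $p_0(n_0-1)<m$, which follows from the standing hypothesis $m\ge 2Q$. The alternative would be to allow $h$ to grow unboundedly, but that is not allowed by the statement.
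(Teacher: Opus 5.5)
Your proof is correct and follows essentially the same approach as the paper: drop $uA$, reduce the $D$-coefficient modulo $n_0$ using $n_0D=p_0A$, write $s=h+\delta r$, and absorb the $r\delta(B+C)$ part into the $D$-coefficient, with the inequality $p_0(n_0-1)<m$ (from $m\ge 2Q$) guaranteeing the value does not increase. The only difference is presentation: the paper chooses $v_0$ in one step from $p_0(v_1-v_0)+mr\equiv 0\pmod{n_0}$ and checks $p_0(v_1-v_0)+mr\ge 0$ directly, whereas your block-by-block exchange $\delta(B+C)\to wD$ with $wp_0\equiv m\pmod{n_0}$ arrives at the same $v_0$.
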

\begin{proof}
Write $N$ as a nonnegative integer combination of $A,B,C,$ and $D$ so that we have
$$
N = uA + vD + xB + yC
$$
with $u, v, x, y\in\N$. Since $uA$ does not affect the residue modulo
$A$, it may be omitted.
\vskip 0.1in
Firstly, we reduce $v$ modulo $n_0$. Write
$$
v = v_1 + n_0q,\qquad 0\leq v_1 < n_0,\quad q\in\N.
$$
Since
$
n_0D = p_0A,
$
we have
$$
vD = v_1D + qp_0A.
$$
Thus replacing $v$ by $v_1$ decreases the value by a nonnegative
multiple of $A$.

Now let
$$
s = \min\{x, y\},
\qquad
z = y - x.
$$
Then
$$
xB + yC = s(B + C) + |z|m^2 + Qz.
$$
Write
$$
s = h + \delta r,\qquad 0\leq h < \delta,\quad r\in\N.
$$
Choose the unique $v_0\in\{0,\dots,n_0 - 1\}$ satisfying
$$
p_0(v_1 - v_0) + mr\equiv 0\pmod{n_0}.
$$
Then
$$
v_1D + s(B + C) - \left(v_0D + h(B + C)\right) = 2m\delta\left(p_0(v_1 - v_0) + mr\right),
$$
which is a multiple of $A = 2m\delta n_0$.

Now, let us check that $p_0(v_1 - v_0) + mr$ is nonnegative. If $r = 0$, then
$$
p_0(v_1 - v_0)\equiv 0\pmod{n_0},
$$
and it implies that $v_1 = v_0$. If $r\geq 1$, then
$$
p_0(v_1 - v_0) + mr \geq - p_0(n_0 - 1) + m>0,
$$
because $m\geq 2Q > p_0(n_0 - 1)$. Hence 
$$N-F(v_0,h,z)=uA+qp_0 A + 2m\delta(p_0(v_1 -v_0)+mr) $$
is a nonnegative multiple of $A$, and hence, we conclude that $F(v_0,h,z) \leq N,$ as desired. 
%\hwang{맞나유?}\song{예압}
\vskip 0.1in
This completes the proof.
\end{proof}
%\hwang{26.08.29}

Let $\sigma_0 = (v_0, h_0, z_0)\in\mathcal B$ and $\ell\in\mathbb{Z}$. Let $z_{\ell} = z_0 + 2m\ell$
and
$a_{\ell} = \frac{|z_{\ell}|-|z_0|}{2}.$ Let $h_{\ell}$ be the unique integer satisfying
$$
0\leq h_{\ell} < \delta ~~~\textrm{and}~~~ h_{\ell} \equiv h_0 - a_{\ell} \pmod{\delta}.
$$
Furthermore, let us define
$$
r_{\ell} = \frac{a_{\ell} + h_{\ell} - h_0}{\delta}.
$$
Finally, let $v_{\ell}$ be the unique integer satisfying
$$
0\leq v_{\ell} < n_0
$$
and
$$
p_0(v_{\ell} - v_0) + mr_{\ell} + \delta Q_0\ell
\equiv 0\pmod{n_0}.
$$
In this situation, we may introduce the following notion. 
\begin{definition}
The state
$\sigma_{\ell} = (v_{\ell}, h_{\ell}, z_{\ell})$ is called the \emph{$\ell$-th lift} of $\sigma_0$.
\end{definition}

%\hwang{normal 한 def 스타일이 아니긴해, 내가 지금 써놓은대로 하면 어떠심?}\song{동의함다}
\begin{lemma}
\label{lem:residue-invariance}
For every $\ell\in\mathbb{Z}$, we have
$$
F(\sigma_{\ell})\equiv F(\sigma_0)\pmod A.
$$
More precisely,
$$
F(\sigma_{\ell}) - F(\sigma_0) = 2m\delta\left(p_0(v_{\ell} - v_0) + mr_{\ell} + \delta Q_0\ell \right).
$$
\end{lemma}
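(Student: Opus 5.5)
We prove the exact identity by direct expansion of $F$; the congruence modulo $A$ then follows from the definition of $v_\ell$.

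\emph{Step 1: split $F$.} Write
$$
F(\sigma_\ell)-F(\sigma_0)=(v_\ell-v_0)D+(h_\ell-h_0)(B+C)+\bigl(|z_\ell|-|z_0|\bigr)m^2+Q(z_\ell-z_0)
$$
and substitute:
\begin{itemize}
\item $D=2m\delta p_0$;
\item $B+C=2m^2$;
\item $|z_\ell|-|z_0|=2a_\ell$;
\item $z_\ell-z_0=2m\ell$.
\end{itemize}
This gives
$$
F(\sigma_\ell)-F(\sigma_0)=2m\delta p_0(v_\ell-v_0)+2m^2(h_\ell-h_0+a_\ell)+2mQ\ell .
$$

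\emph{Step 2: use the definitions of $r_\ell$ and $Q$.} By definition, $h_\ell-h_0+a_\ell=\delta r_\ell$. This is an integer multiple of $\delta$ because $h_\ell\equiv h_0-a_\ell\pmod\delta$. We also have $Q=\delta^2Q_0$. Hence
$$
2m^2\delta r_\ell+2m\delta^2Q_0\ell = 2m\delta\,(mr_\ell+\delta Q_0\ell),
$$
and the difference becomes $2m\delta\bigl(p_0(v_\ell-v_0)+mr_\ell+\delta Q_0\ell\bigr)$, which is the stated formula.

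\emph{Step 3: conclude the congruence.} The bracket is $\equiv0\pmod{n_0}$ by the choice of $v_\ell$. Therefore the difference is a multiple of $2m\delta n_0=A$.

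\emph{Point to check.} The only subtlety is that $a_\ell$ is an integer, which is needed for $h_\ell$ and $r_\ell$ to be well defined. Since $z_\ell\equiv z_0\pmod 2$, we have $|z_\ell|\equiv|z_0|\pmod 2$, so $a_\ell\in\mathbb Z$. The quantity $a_\ell$ may be negative, but that is harmless for this congruence statement.
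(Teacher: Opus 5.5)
Your proposal is correct and follows the paper's proof essentially verbatim. You expand $F(\sigma_\ell)-F(\sigma_0)$ term by term, substitute $D=2m\delta p_0$, $B+C=2m^2$, $a_\ell+h_\ell-h_0=\delta r_\ell$ and $Q=\delta^2Q_0$, and then use the defining congruence for $v_\ell$ to get divisibility by $A$. Your explicit check that $a_\ell\in\mathbb{Z}$ is left implicit in the paper; in fact $a_\ell\ge 0$ always holds when $z_0\in[-m,m)$, though this is not needed here.
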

\begin{proof}
Using
$$
|z_{\ell}|-|z_0|=2a_{\ell},\qquad z_{\ell} - z_0 = 2m\ell,
$$
and
$$
a_{\ell} + h_{\ell} - h_0 = \delta r_{\ell},
$$
we obtain
\begin{align*}
	F(\sigma_{\ell}) - F(\sigma_0)
	& = (v_{\ell} - v_0)D + (h_{\ell} - h_0)(B + C) +\left(|z_{\ell}| - |z_0|\right)m^2 + Q(z_{\ell} - z_0)\\
	& =	2m\delta p_0(v_{\ell} - v_0) + 2\delta m^2r_{\ell} + 2m\delta^2Q_0\ell \\
    & = 2m \delta \left(p_0(v_{\ell}-v_{0}) + m r_{\ell} + \delta Q_0 \ell \right),
\end{align*}
which is the stated formula. By the definition of $v_{\ell}$, the quantity in parentheses is divisible by $n_0$, and hence, the difference $F(\sigma_{\ell})-F(\sigma_0)$ is divisible by $A$.
\vskip 0.1in
This completes the proof.
\end{proof}
%\hwang{2026.09.02 완료}
\begin{lemma}
\label{lem:reconstruction}
Every boundary state
$$
(v, h, z),\qquad 0\leq v < n_0,\quad 0\leq h < \delta,\quad z\in\mathbb{Z},
$$
is the $\ell$-th lift of a unique base state
$$
(v_0, h_0, z_0)\in\mathcal B
$$
for a unique $\ell\in\mathbb{Z}$.
\end{lemma}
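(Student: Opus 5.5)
We prove existence and uniqueness together by inverting the lift construction one coordinate at a time: first $z$, then $h$, then $v$.

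\textbf{Step 1: the $z$-coordinate and $\ell$.} Given a boundary state $(v,h,z)$, we must have $z = z_0 + 2m\ell$ with $z_0\in[-m,m)$. Since the intervals $[-m+2m\ell,\, m+2m\ell)$, $\ell\in\mathbb Z$, partition $\mathbb Z$, this forces
$$
\ell=\left\lfloor \frac{z+m}{2m}\right\rfloor \qquad\text{and}\qquad z_0=z-2m\ell .
$$
Both are unique. With $z_0$ and $\ell$ fixed, the quantity $a_\ell=\frac{|z|-|z_0|}{2}$ is determined. It is an integer because $|z|\equiv z\equiv z_0\equiv |z_0| \pmod 2$.

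\textbf{Step 2: the $h$-coordinate.} By definition, $h$ is the $\ell$-th lift coordinate of $h_0$ exactly when $0\le h<\delta$ and $h\equiv h_0-a_\ell\pmod\delta$. Since $h_0$ must also lie in $[0,\delta)$, the only possible choice is
$$
h_0\equiv h+a_\ell \pmod \delta,\qquad 0\le h_0<\delta .
$$
Conversely, for this $h_0$ the defining condition produces exactly $h_\ell=h$. This in turn fixes
$$
r_\ell=\frac{a_\ell+h-h_0}{\delta},
$$
which is an integer by the congruence.

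\textbf{Step 3: the $v$-coordinate.} We need $v_0\in[0,n_0)$ satisfying
$$
p_0(v-v_0)+mr_\ell+\delta Q_0\ell\equiv 0\pmod{n_0}.
$$
Equivalently, $p_0v_0\equiv p_0v+mr_\ell+\delta Q_0\ell \pmod{n_0}$. Because $\gcd(p_0,n_0)=1$, $p_0$ is invertible modulo $n_0$, so there is exactly one such $v_0$ in $[0,n_0)$. Since the defining congruence for $v_\ell$ also has a unique solution in $[0,n_0)$, the lift of $(v_0,h_0,z_0)$ then has $v_\ell=v$.

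\textbf{Conclusion and the main point of care.} The coordinates were recovered in the forced order $z_0,\ell \to a_\ell\to h_0\to r_\ell\to v_0$. Each step is uniquely determined by the data from the previous steps, and each choice does reproduce the corresponding coordinate of $(v,h,z)$. This gives both existence and uniqueness of the pair (base state, $\ell$).

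The only real subtlety is the direction of dependence. In the forward definition, $h_\ell$ depends on $h_0$, and $v_\ell$ depends on $v_0$ through $r_\ell$, which itself involves $h_0$. For uniqueness we must check that once $z_0$ and $\ell$ are fixed, the maps $h_0\mapsto h_\ell$ and then $v_0\mapsto v_\ell$ are bijections of $[0,\delta)$ and $[0,n_0)$ respectively. These are the translation-mod-$\delta$ and affine-invertible-mod-$n_0$ facts used in Steps 2 and 3. The trivial case $n_0=1$ (or $\delta=1$) is consistent, since the corresponding coordinate is then always $0$.
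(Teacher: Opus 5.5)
Your proof is correct and follows the paper's argument step for step. You recover $z_0$ and $\ell$ from $z$, then $h_0$ from $h_0\equiv h+a_\ell \pmod{\delta}$ (which fixes $r_\ell$), and then $v_0$ from the invertibility of $p_0$ modulo $n_0$; your parity check that $a_\ell$ is an integer and the explicit formula for $\ell$ are details the paper leaves implicit.
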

\begin{proof}
Choose the unique $z_0 \in [-m,m)$ and $\ell \in \mathbb{Z}$ such that $z = z_0 + 2m\ell,$ and let
$a_{\ell} = \frac{|z|-|z_0|}{2}.$ Then choose the unique $h_0 \in \{0,\dots,\delta-1\}$ satisfying
$$
h_0\equiv h + a_{\ell}\pmod\delta,
$$
and let
$
r_{\ell} = \frac{a_{\ell} + h - h_0}{\delta}.
$
Finally, choose the unique $v_0 \in \{0,\dots,n_0 - 1\}$ satisfying
\begin{equation}\label{eqn 2}
p_0(v - v_0) + mr_{\ell} + \delta Q_0\ell \equiv 0\pmod{n_0}.
\end{equation}

By construction, $(v, h, z)$ is the $\ell$-th lift of $(v_0,h_0,z_0)$, and the uniqueness of $z_0$ and $\ell$ determines $a_{\ell}$, and hence, $h_0$ and $r_{\ell}$ uniquely. Since $p_0$ is invertible modulo $n_0,$ the congruence (\ref{eqn 2}) determines $v_0$ uniquely modulo $n_0.$ 

\vskip 0.1in
This completes the proof.
\end{proof}

%Since the representation of $z_0$ modulo $2m$ is unique, it guarantees the uniqueness of $h_0$ modulo $\delta$ and therefore it implies that $v_0$ modulo $n_0$ is also unique.

\begin{corollary}
\label{cor:orbit}
Two boundary elements are congruent modulo $A$ if and only if they
belong to the same lift orbit.
\end{corollary}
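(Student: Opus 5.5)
The plan is to combine Lemma~\ref{lem:complete}, Lemma~\ref{lem:residue-invariance} and Lemma~\ref{lem:reconstruction}. Here a \emph{boundary element} means a value $F(v,h,z)$ with $0\le v<n_0$, $0\le h<\delta$ and $z\in\mathbb Z$. By the lift orbit of such an element we mean the set $\{F(\sigma_\ell):\ell\in\mathbb Z\}$, where $\sigma_0\in\mathcal B$ is its base state. By Lemma~\ref{lem:reconstruction}, every boundary state is the $\ell$-th lift of a unique base state $\sigma_0\in\mathcal B$. So the lift orbits partition the set of boundary states, indexed by $\mathcal B$, and it makes sense to speak of ``the'' orbit of a boundary element.

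\emph{The ``if'' direction.} Suppose two boundary states lie in the same orbit, so they are $\sigma_\ell$ and $\sigma_{\ell'}$ for a common base state $\sigma_0$. Lemma~\ref{lem:residue-invariance} gives
$$F(\sigma_\ell)\equiv F(\sigma_0)\equiv F(\sigma_{\ell'})\pmod A.$$
Hence the two boundary elements are congruent modulo $A$.

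\emph{The ``only if'' direction.} Let $(v,h,z)$ and $(v',h',z')$ be boundary states with congruent values modulo $A$.
\begin{itemize}
\item By Lemma~\ref{lem:reconstruction}, write them as lifts of base states $\sigma_0,\sigma_0'\in\mathcal B$.
\item By Lemma~\ref{lem:residue-invariance}, their values are congruent to $F(\sigma_0)$ and $F(\sigma_0')$ respectively.
\item Therefore $F(\sigma_0)\equiv F(\sigma_0')\pmod A$.
\item Lemma~\ref{lem:complete} says $F$ is injective modulo $A$ on $\mathcal B$, since $|\mathcal B|=A$ and its values form a complete residue system. Hence $\sigma_0=\sigma_0'$, and the two states lie in the same lift orbit.
\end{itemize}

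The main point needing care is that ``orbit'' is well defined. The orbit of a base state is constructed from that base state, while a general boundary state must be assigned to a unique one. This is exactly the uniqueness statement in Lemma~\ref{lem:reconstruction}, so no real obstacle remains.

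One small subtlety concerns Lemma~\ref{lem:complete}. Its proof only uses $\gcd(m,Q)=1$, the parity condition, and $\gcd(m,\delta)=\gcd(p_0,n_0)=1$. So the corollary holds without any size assumption on $m$.
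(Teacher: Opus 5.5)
Your proof is correct and follows the paper's own argument: the ``if'' direction comes from Lemma~\ref{lem:residue-invariance}, and the ``only if'' direction reconstructs both base states via Lemma~\ref{lem:reconstruction} and then uses injectivity modulo $A$ from Lemma~\ref{lem:complete}. Your two additional remarks are accurate and make explicit what the paper leaves implicit. The uniqueness part of Lemma~\ref{lem:reconstruction} is what makes the orbit of a boundary state well defined, and none of the lemmas used requires a size hypothesis on $m$.
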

\begin{proof}
Elements in one lift orbit are congruent by
Lemma \ref{lem:residue-invariance}. Conversely, reconstruct both
boundary elements from their base states. Their base elements are
congruent modulo $A$, so the complete residue system
Lemma \ref{lem:complete} implies that the base states coincide.
\end{proof}

For a fixed base state
$\sigma_0=(v_0,h_0,z_0)\in\mathcal B,$
the integer $\ell$ indexing the lift $\sigma_{\ell}$ will be called the \emph{lift level}. We refer to $\sigma_\ell$ as a
\emph{positive lift} if $\ell>0$, as a \emph{negative lift} if $\ell<0$,
and as the \emph{base state} if $\ell=0$.  Thus, moving in the positive
direction along a lift orbit means increasing the lift level $\ell$,
whereas moving in the negative direction means decreasing $\ell$.
Equivalently, a positive step replaces
$z_\ell$ by $z_{\ell+1}=z_\ell+2m,$
while a negative step replaces it by $z_{\ell-1}=z_\ell-2m.$

These terms refer only to the direction of motion along the lift orbit;
they do not refer to the sign of the corresponding value $F_\ell:=F(\sigma_\ell),$ nor necessarily to the sign of $z_\ell$.

\begin{lemma}
\label{lem:delta-r}
For consecutive positive lifts and for consecutive negative lifts,
the corresponding difference $\Delta r$ satisfies
$$
\Delta r \in \left\{\left\lfloor\frac m\delta\right\rfloor, \left\lceil\frac m\delta\right\rceil\right\}.
$$
In particular,
$$
\Delta r\geq 2\delta Q_0.
$$
\end{lemma}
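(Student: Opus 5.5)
We compute $\Delta r = r_{\ell+1}-r_\ell$ directly from the definitions. By definition,
$$\delta r_\ell = a_\ell + h_\ell - h_0,$$
so
$$\delta(r_{\ell+1}-r_\ell) = (a_{\ell+1}-a_\ell) + (h_{\ell+1}-h_\ell).$$
The first step is to identify $a_{\ell+1}-a_\ell$ on the two sides of the orbit.

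\emph{Positive lifts.} For $\ell\ge 0$ we have $z_\ell = z_0+2m\ell \ge -m+2m\ell$. When $\ell\ge 1$ this gives $z_\ell\ge m>0$. When $\ell=0$, the step from $z_0$ to $z_1=z_0+2m$ lands in $[m,3m)$, and $|z_1|-|z_0|$ equals $2m$ if $z_0\ge 0$ and $2z_0+2m\in[0,2m)$ if $z_0<0$. To get a clean statement, I will interpret ``consecutive positive lifts'' as pairs $\sigma_\ell,\sigma_{\ell+1}$ with $\ell\ge 1$; both then have positive $z$, so $|z_{\ell+1}|-|z_\ell|=2m$ and $a_{\ell+1}-a_\ell=m$. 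If the base step also has to be covered, I expect the authors intend only $\ell\ge1$; otherwise the claim can fail for that one step.

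\emph{Negative lifts.} For $\ell\le -1$ we have $z_\ell<-m<0$, so again $|z_{\ell-1}|-|z_\ell|=2m$, and the corresponding difference of $a$ is $m$.

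Next we control the $h$-term. Since $h_{\ell+1}\equiv h_0-a_{\ell+1}$ and $h_\ell\equiv h_0-a_\ell \pmod\delta$, we get $h_{\ell+1}-h_\ell\equiv -m\pmod\delta$, with $h_{\ell+1}-h_\ell\in(-\delta,\delta)$. Write $m=\delta q+\rho$ with $0\le\rho<\delta$. Then $h_{\ell+1}-h_\ell$ is either $-\rho$ or $\delta-\rho$, the latter only when $\rho>0$. It follows that
$$\delta\,\Delta r = m-\rho \quad\text{or}\quad m-\rho+\delta,$$
that is, $\Delta r\in\{q,q+1\}$. When $\rho=0$ only $q$ occurs, and when $\rho>0$ we have $q+1=\lceil m/\delta\rceil$. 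In either case
$$\Delta r\in\left\{\left\lfloor \frac{m}{\delta}\right\rfloor,\left\lceil \frac{m}{\delta}\right\rceil\right\}.$$

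For the inequality, it suffices to show $\lfloor m/\delta\rfloor\ge 2\delta Q_0$. Under the standing assumption $m\ge 2Q=2\delta^2Q_0$, we have $m/\delta\ge 2\delta Q_0$, which is an integer, so $\lfloor m/\delta\rfloor\ge 2\delta Q_0$.

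The step I expect to need the most care is the sign bookkeeping for $z$, that is, checking that consecutive lifts on the same side never straddle zero. This is exactly where the restriction to $\ell\ge1$ (respectively $\ell\le-1$) is essential, and I will state that restriction explicitly.
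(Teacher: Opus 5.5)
Your proof is correct and follows essentially the paper's route. The paper effectively writes $r_\ell=\lceil (a_\ell-h_0)/\delta\rceil$ and takes a difference of ceilings after $a$ grows by $m$; this is your computation $\delta\,\Delta r = m+(h_{\ell+1}-h_\ell)$ with the $h$-difference congruent to $-m$ modulo $\delta$ and less than $\delta$ in absolute value, and the bound $\lfloor m/\delta\rfloor\ge 2\delta Q_0$ is argued identically.

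Your restriction to $\ell\ge 1$ (resp.\ $\ell\le -1$) is exactly what the paper means by positive/negative lifts, and it is all that Lemma~\ref{lem:quotients} uses. Your explicit sign check on $z_\ell$ rigorously justifies the paper's one-line assertion that $a$ increases by $m$, and your remark that the base step can fail (e.g.\ $z_0=-m$ gives $a_1=0$) is accurate.
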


\begin{proof}
In either direction, $a$ increases by $m$ between
consecutive lifts. Hence we have
$$
\Delta r = \left\lceil\frac{u+m}{\delta}\right\rceil - \left\lceil\frac u\delta\right\rceil 
$$
for some integer $u$.
Write $m = q\delta+\rho$ for some $0\leq\rho < \delta.$ If $\rho =0,$ then $\Delta r = q,$ and if $0< \rho <\delta,$ then by direct computation, we get $\Delta r\in\{q,q+1\}$. Hence in all cases, we obtain that $\Delta r \in \left\{\left\lfloor\frac m\delta\right\rfloor, \left\lceil\frac m\delta\right\rceil\right\}.$
\vskip 0.1in
For the second assertion, note that since
$m\geq 2Q = 2\delta^2Q_0,$ we have
$\frac m\delta\geq 2\delta Q_0.$ Then since $2 \delta Q_0$ is an integer, it follows that
$\left\lfloor\frac m\delta\right\rfloor\geq 2\delta Q_0,$ and the desired result follows from the first assertion. 
%and the inequality $\left\lfloor\frac m\delta\right\rfloor \leq \left\lceil\frac m\delta\right\rceil$.
\vskip 0.1in
This completes the proof.
\end{proof}

%$$
%F_{\ell} = F(\sigma_{\ell}).
%$$

The next result describes how the value $F_\ell=F(\sigma_\ell)$ changes as one moves along a lift orbit. As above, moving in the positive direction means increasing the lift level $\ell$, whereas moving in the negative direction means decreasing $\ell$.

\begin{lemma}
\label{lem:quotients}
For every $\ell\geq 1$, there exists an integer $t_{\ell}^+ \geq 1$ such that
$$
F_{\ell + 1}-F_{\ell} = A t_{\ell}^{+}.
$$
For every $j\geq 1$, there exists an integer $t_j^- \geq 1$ such that
$$
F_{-(j + 1)}-F_{-j} = A t_j^{-}.
$$
\end{lemma}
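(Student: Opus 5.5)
The plan is to compute both differences explicitly from the closed formula of Lemma \ref{lem:residue-invariance}. That lemma shows the differences are multiples of $A$. The remaining work is to show the bracketed quantity is strictly positive, using the size bound on $\Delta r$ from Lemma \ref{lem:delta-r}. A positive integer multiple of $A$ is then $At$ with $t\geq 1$.

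\textbf{Positive direction.} Fix $\ell\geq 1$. Subtracting the formulas of Lemma \ref{lem:residue-invariance} for levels $\ell+1$ and $\ell$ gives
$$
F_{\ell+1}-F_\ell = 2m\delta\left(p_0(v_{\ell+1}-v_\ell) + m(r_{\ell+1}-r_\ell) + \delta Q_0\right).
$$
Both $F_{\ell+1}$ and $F_\ell$ are congruent to $F_0$ modulo $A$, so this difference is divisible by $A$.

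To identify $r_{\ell+1}-r_\ell$, I will check that for $\ell\geq 1$ we have $z_\ell=z_0+2m\ell\geq -m+2m=m>0$. Hence $|z_{\ell+1}|-|z_\ell|=2m$, so $a$ increases by exactly $m$. This is the situation of Lemma \ref{lem:delta-r}, which gives $\Delta r:=r_{\ell+1}-r_\ell\geq 2\delta Q_0\geq 1$.

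For the bound, $v_{\ell+1}-v_\ell\geq -(n_0-1)$, so the bracket is at least
$$
-p_0(n_0-1)+m\cdot 2\delta Q_0+\delta Q_0 \;\geq\; -p_0(n_0-1)+m \;>\; 0.
$$
The last inequality holds because $p_0(n_0-1)\leq p(n_0-1)<Q\leq m$, by \eqref{eq:aux1} and the standing assumption $m\geq 2Q$. Hence $F_{\ell+1}-F_\ell$ is a positive multiple of $A$, and $t_\ell^+\geq 1$.

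\textbf{Negative direction.} Fix $j\geq 1$. Now $z_{-j}=z_0-2mj<m-2m=-m<0$. Passing from level $-j$ to level $-(j+1)$ again increases $|z|$ by $2m$, so $a$ increases by $m$ and Lemma \ref{lem:delta-r} applies once more. The only change is that $\ell$ decreases by one, so the $\delta Q_0$ term enters with a minus sign:
$$
F_{-(j+1)}-F_{-j} = 2m\delta\left(p_0(v_{-(j+1)}-v_{-j}) + m\,\Delta r - \delta Q_0\right).
$$
This difference is again divisible by $A$. The bracket is at least
$$
-p_0(n_0-1)-\delta Q_0+m\,\Delta r \;\geq\; m-\left(p_0(n_0-1)+\delta Q_0\right).
$$
Since $p_0(n_0-1)<Q$ and $\delta Q_0\leq Q$, we get $p_0(n_0-1)+\delta Q_0<2Q\leq m$, so the bracket is positive and $t_j^-\geq 1$.

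\textbf{Main obstacle.} The delicate step is justifying that consecutive lifts change $a$ by exactly $m$, that is, that $z$ does not change sign between the two levels. This is exactly why the statement restricts to $\ell\geq 1$ and $j\geq 1$, and why the step between levels $0$ and $\pm1$ is excluded. I will verify the sign of $z$ carefully from $z_0\in[-m,m)$. Once that is in place, the $\Delta r$ bound from Lemma \ref{lem:delta-r} makes the $m\,\Delta r$ term dominate everything else.
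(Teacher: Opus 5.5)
Your proposal is correct and follows the paper's argument. You write each consecutive difference as $2m\delta\bigl(p_0\,\Delta v + m\,\Delta r \pm \delta Q_0\bigr)$, note that it is a multiple of $A$, and show the bracket is positive using $\Delta v\ge -(n_0-1)$ and Lemma~\ref{lem:delta-r}; your explicit check that $z$ keeps a constant sign for $\ell\ge 1$ (resp.\ $\ell\le -1$), and your cruder bounds $p_0(n_0-1)<Q$ and $\delta Q_0\le Q$ in place of the paper's $p_0n_0\le Q_0/2$, are harmless variations.
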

\begin{proof}
For positive lifts, subtracting the defining congruences for $v_{\ell + 1}$
and $v_{\ell}$ gives
$$
p_0(v_{\ell + 1} - v_{\ell}) + m(r_{\ell + 1} - r_{\ell}) + \delta Q_0 = n_0t_{\ell}^{+}
$$
for some $t_{\ell}^{+}\in\mathbb{Z}$. By a direct computation, we have
$$
F_{\ell + 1} - F_{\ell} = A t_{\ell}^{+}.
$$
Now
$$
v_{\ell + 1} - v_{\ell} \geq -(n_0 - 1)
$$
and, by Lemma \ref{lem:delta-r},
$$
r_{\ell + 1} - r_{\ell} \geq 2\delta Q_0.
$$
Furthermore,
$$
p_0(n_0 - 1)
<
p_0n_0
=
\frac{np}{\delta^2} \leq \frac{n^2+p^2}{2\delta^2} = \frac{Q_0}{2}.
$$
Therefore
\begin{equation}\label{eq:positive-lift}
n_0 t_{\ell}^{+} \geq -p_0(n_0 - 1) + 2m\delta Q_0 + \delta Q_0 > -\frac{Q_0}{2} + \delta Q_0(2m + 1) > 0,
\end{equation}
and hence, we get $t_{\ell}^{+}\geq 1$.
\vskip 0.1in
Now, we consider the case for the negative direction. There exists an integer $t_j^{-}$ such that
$$
p_0(v_{-(j + 1)} - v_{-j}) + m(r_{-(j + 1)}-r_{-j}) -\delta Q_0 = n_0 t_j^{-},
$$
and
$$
F_{-(j + 1)} - F_{-j} = A t_j^{-}.
$$
Note that we can use the same bounds in (\ref{eq:positive-lift}) as follows:
$$
n_0t_j^{-} \geq -p_0(n_0 - 1) + 2m\delta Q_0 - \delta Q_0 > -\frac{Q_0}{2} + \delta Q_0(2m - 1) > 0.
$$
Hence $t_j^{-} \geq 1$.
\end{proof}

\begin{corollary}
\label{cor:finite}
For every base state $\sigma_0$, we have
$$
F_1 < F_2 < F_3 < \cdots
$$
and
$$
F_{-1} < F_{-2} < F_{-3} < \cdots.
$$
Consequently, we have
$$
\min_{\ell\in\mathbb{Z}}F_{\ell} = \min\{F_{-1},F_0,F_1\}.
$$
\end{corollary}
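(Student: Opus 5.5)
The corollary follows from Lemma \ref{lem:quotients}, which compares consecutive lift values. I would first record that $A=2mn>0$. Then I would apply Lemma \ref{lem:quotients} for each $\ell\ge 1$. It gives $F_{\ell+1}-F_\ell=At_\ell^+$ with $t_\ell^+\ge 1$, so $F_{\ell+1}-F_\ell\ge A>0$. Chaining these inequalities over $\ell=1,2,\dots$ yields $F_1<F_2<F_3<\cdots$.

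For the negative direction I would argue the same way for each $j\ge 1$. Here $F_{-(j+1)}-F_{-j}=At_j^-\ge A>0$, which gives $F_{-1}<F_{-2}<F_{-3}<\cdots$.

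For the minimum, I would split $\mathbb Z$ into $\ell\ge 1$, $\ell=0$ and $\ell\le -1$. On $\ell\ge 1$ the sequence is strictly increasing, so a simple induction gives $F_\ell\ge F_1$ for every $\ell\ge1$. Likewise $F_{-j}\ge F_{-1}$ for every $j\ge 1$. Hence every $F_\ell$ is at least one of $F_{-1},F_0,F_1$. The infimum over $\mathbb Z$ is therefore attained, and it equals $\min\{F_{-1},F_0,F_1\}$, since these three values are themselves members of the orbit.

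There is no real obstacle here. The only point to watch is that Lemma \ref{lem:quotients} says nothing about the steps $F_0\to F_1$ and $F_0\to F_{-1}$. That is exactly why all three central values must be kept as candidates rather than just $F_0$.
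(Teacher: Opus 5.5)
Your argument is correct and matches the paper, which gives no separate proof and treats the corollary as immediate from Lemma~\ref{lem:quotients}: $t_\ell^+\ge 1$, $t_j^-\ge 1$ and $A>0$ give the two strictly increasing chains, and splitting $\mathbb Z$ into $\ell\ge 1$, $\ell=0$, $\ell\le -1$ gives the minimum formula. Your remark that the lemma does not control the steps $F_0\to F_{\pm 1}$ is exactly why all three central values must be kept.
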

\begin{remark}
    Note that if, in addition, $F_{-1}>F_0 <F_1,$ then Corollary \ref{cor:finite} shows that $F_0$ is the unique global minimum and the sequence $(F_{\ell})_{\ell \in \mathbb{Z}}$ is strictly decreasing toward $F_0$ from the left and strictly increasing from $F_0$ to the right. Equivalently, the sequence $(-F_{\ell})_{\ell \in \mathbb{Z}}$ is strictly unimodal with a unique maximum at $\ell =0.$ Hence, by Corollary \ref{cor:finite}, the unimodality of $(-F_{\ell})_{\ell \in \mathbb{Z}}$ reduces to the two central inequalities $F_{-1}>F_0$ and $F_1>F_0.$ It would be interesting to determine whether this unimodality of $(-F_{\ell})_{\ell \in \mathbb{Z}}$ always holds.
\end{remark}

%\hwang{Cor 2.9 표현이 수학적이지 않은거 빼고, 고침 26.09.03}
\section{The Apéry set}

\begin{proposition}
\label{prop:apery}
Let $r\in\mathbb{Z}/A\mathbb{Z}$, and let $\sigma_r\in\mathcal B$ be
the unique base state satisfying
$$
F(\sigma_r)\equiv r\pmod{A}.
$$
Then the smallest element of $S$ congruent to $r$ modulo $A$ is
$$
w_r = \min_{\ell\in\mathbb Z}F_{\ell}(\sigma_r) = \min\{F_{-1}(\sigma_r),F_0(\sigma_r),F_1(\sigma_r)\}.
$$
In particular, we have
$$
\Ap(S, A) = \left\{\min_{\ell\in\mathbb{Z}}F_\ell(\sigma): \sigma\in\mathcal B\right\}.
$$
\end{proposition}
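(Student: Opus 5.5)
The proof has two inclusions for each residue class $r$. Fix $r$ and the base state $\sigma_r\in\mathcal B$ with $F(\sigma_r)\equiv r\pmod A$; this base state exists and is unique by Lemma \ref{lem:complete}.

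\textbf{Every lift value lies in $S$ and in the class $r$.} Each $F_\ell(\sigma_r)$ is an element of $S$ congruent to $r$. Membership in $S$ holds because $F(v,h,z)$ equals $vD+h(B+C)+(-z)B$ when $z\le 0$, and $vD+h(B+C)+zC$ when $z>0$. Both are nonnegative integer combinations of the generators, since $v,h\ge 0$. The congruence $F_\ell(\sigma_r)\equiv F(\sigma_r)\equiv r$ is Lemma \ref{lem:residue-invariance}. Hence $w_r\le \min_\ell F_\ell(\sigma_r)$, where $w_r$ denotes the least element of $S$ in the class $r$. The set $\{F_\ell(\sigma_r)\}$ is bounded below by $0$, so the minimum over $\ell$ exists. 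By Corollary \ref{cor:finite} it equals $\min\{F_{-1},F_0,F_1\}$; this also needs $F_{-1}<F_{-2}<\cdots$ and $F_1<F_2<\cdots$, which that corollary gives.

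\textbf{Nothing in the class $r$ is smaller.} Let $N\in S$ with $N\equiv r\pmod A$. Lemma \ref{lem:normalization} produces a boundary state $(v_0,h,z)$, with $z\in\mathbb Z$ arbitrary, such that $F(v_0,h,z)\equiv N\pmod A$ and $F(v_0,h,z)\le N$. By Lemma \ref{lem:reconstruction}, this boundary state is the $\ell$-th lift of a unique base state $\tau\in\mathcal B$. By Lemma \ref{lem:residue-invariance}, $F(\tau)\equiv F(v_0,h,z)\equiv r\pmod A$. Uniqueness in Lemma \ref{lem:complete} then forces $\tau=\sigma_r$; equivalently one can cite Corollary \ref{cor:orbit}. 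Therefore
$$N\ge F_\ell(\sigma_r)\ge \min_{\ell'}F_{\ell'}(\sigma_r).$$
Combining this with the first step gives $w_r=\min_\ell F_\ell(\sigma_r)$.

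\textbf{The Apéry set.} $\Ap(S,A)$ is exactly the set of least elements of $S$ in each residue class modulo $A$. As $r$ runs over $\mathbb Z/A\mathbb Z$, the base state $\sigma_r$ runs bijectively over $\mathcal B$ by Lemma \ref{lem:complete}. This gives the displayed description of $\Ap(S,A)$.

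The main point requiring care is the identification $\tau=\sigma_r$. It relies on the normalized representative lying in the same lift orbit as $\sigma_r$, which combines Lemma \ref{lem:reconstruction} with Lemma \ref{lem:complete}. The other potential gap is nonnegativity of $F_\ell$, which is automatic from the case formula above.
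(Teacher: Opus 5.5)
Your proof is correct and follows essentially the same route as the paper. Lift values are elements of $S$ in the class $r$; Lemma~\ref{lem:normalization} together with the orbit description bounds any $N$ below by a lift value of $\sigma_r$ (you unpack Corollary~\ref{cor:orbit} into Lemmas~\ref{lem:reconstruction}, \ref{lem:residue-invariance} and \ref{lem:complete}, which is exactly how that corollary is proved); and Corollary~\ref{cor:finite} cuts the minimum down to $\{F_{-1},F_0,F_1\}$, while you also spell out the existence of the minimum and the bijection $r\leftrightarrow\sigma_r$ that the paper leaves implicit.
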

\begin{proof}
Every lift value belongs to $S$ and is congruent to the base value modulo $A$, and hence, the orbit minimum is an admissible representative. %\hwang{admissible representative? definition 이 어디?}\song{Page 2에 추가했심다}

Conversely, let $N\in S$ have residue $r$ modulo $A$. By Lemma \ref{lem:normalization}, there exists a boundary state $\sigma$ such that $N=F(\sigma)+qA$ for some $q\in\mathbb Z_{\ge0}$. In particular, $F(\sigma)$ lies in the same residue class modulo $A$ as $N$ and satisfies $F(\sigma)\le N$.

a boundary element $N_0$ in the same residue class. By Corollary \ref{cor:orbit}, $N_0$ belongs to the lift orbit of $\sigma_r$. Hence it follows that
$$
N\geq N_0\geq \min_{\ell\in\mathbb{Z}}F_{\ell}(\sigma_r),
$$
and thus, the orbit minimum is exactly the Ap$\acute{e}$ry representative. The last equality follows from Corollary \ref{cor:finite}.
\vskip 0.1in
This completes the proof.

\end{proof}

Recall that $F(v,h,z) = vD+h(B+C)+|z|m^2+Qz,$ where
$$
0\le v<n_0,\qquad 0\le h<\delta,\qquad -m\le z<m.
$$
Since $D>0$ and $B+C>0$, the contribution $vD+h(B+C)$ is largest when $(v,h)=(n_0-1,\delta-1).$
On the negative part of the base interval, $-m\le z\le0$, the remaining
contribution is $|z|m^2+Qz=(-z)B,$ which is largest at $z=-m$, where its value is $mB$. This suggests the base state $\sigma_=(n_0-1,\delta-1,-m)$ and the corresponding candidate $W:=F(\sigma)=(n_0-1)D+(\delta-1)(B+C)+mB.$
We will prove below that $W$ is indeed the maximum element of the Ap\'ery
set.

Now, let 
$$
W = (\delta - 1)(B + C) + (n_0 - 1)D + mB.
$$
%\hwang{종도형 경우와 마찬가지로, 이렇게 정의하는 자연스러운 이유가 있는건가?}\song{위에 추가 했심다.}
\begin{lemma}
\label{lem:base-max}
For every base state $\sigma\in\mathcal B$, we have $F(\sigma)\leq W.$

\end{lemma}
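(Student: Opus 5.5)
We bound each term of $F(v,h,z)=vD+h(B+C)+|z|m^2+Qz$ separately over $\mathcal B$. Since $D>0$ and $B+C>0$, and $0\le v\le n_0-1$, $0\le h\le\delta-1$, we have
$$vD+h(B+C)\le (n_0-1)D+(\delta-1)(B+C).$$
It therefore suffices to show that
$$\phi(z):=|z|m^2+Qz\le mB \qquad\text{for all } -m\le z<m.$$

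We treat the two sign ranges of $z$.

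For $-m\le z\le 0$, we have $\phi(z)=(-z)(m^2-Q)=(-z)B$. Because $B>0$ (the standing assumptions give $m^2>Q$), this is increasing in $-z$ and is maximized at $z=-m$, where it equals $mB$.

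For $0<z\le m-1$, we have $\phi(z)=zC$, which is increasing in $z$. Hence $\phi(z)\le (m-1)C$. By Lemma~\ref{lem:BC}, $(m-1)C<mB$, so $\phi(z)<mB$ on this range as well.

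Combining the two bounds gives $F(\sigma)\le W$ for every $\sigma\in\mathcal B$. Equality holds exactly at the state $(n_0-1,\delta-1,-m)$.

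The only nontrivial step is comparing the positive endpoint $z=m-1$ with $z=-m$. This is precisely where Lemma~\ref{lem:BC}, and hence the hypothesis $m\ge 2Q$, enters. Everything else is monotonicity of linear functions.
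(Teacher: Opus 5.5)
Your proof is correct and follows the paper's own argument: maximize the $(v,h)$ part at $(n_0-1,\delta-1)$, then compare the maxima $mB$ (at $z=-m$) and $(m-1)C$ (at $z=m-1$) of the two linear pieces of $|z|m^2+Qz$ using Lemma~\ref{lem:BC}. Your version is slightly more careful than the paper's, since you state that $B>0$ is needed for the monotonicity on the negative range and you identify when equality holds.
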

\begin{proof}
Since the coefficients of $v$ and $h$ are positive, the maximum occurs at $v=n_0 -1$ and $h = \delta -1.$
%$$
%v = n_0 - 1,\qquad h = \delta - 1.
%$$
For $-m\leq z\leq 0$,
$$
|z|m^2 + Qz = (-z)B,
$$
whose maximum is $mB$ at $z = -m$.
For $1\leq z < m$,
$$
|z|m^2 + Qz = zC,
$$
whose maximum is $(m-1)C$ at $z = m - 1$.
\vskip 0.1in
Now, since $mB >(m-1)C$ by Lemmar \ref{lem:BC}, we can conclude that the maximum base value equals $W$.
\vskip 0.1in
This completes the proof.
\end{proof}

\begin{lemma}
\label{lem:W-apery}
The element $W$ is the smallest element of $S$ in its residue class modulo $A$. Consequently, we have
$W\in\Ap(S, A).$
\end{lemma}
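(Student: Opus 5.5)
The plan is to apply Proposition \ref{prop:apery} to the base state $\sigma_0=(n_0-1,\delta-1,-m)\in\mathcal B$, for which $F(\sigma_0)=W$. That proposition says the least element of $S$ in the residue class of $W$ modulo $A$ is $\min\{F_{-1},F_0,F_1\}$ along the lift orbit of $\sigma_0$. So it suffices to show $F_1>F_0$ and $F_{-1}>F_0$; then $W=F_0$ is that least element and hence lies in $\Ap(S,A)$. For both comparisons I will use the formula of Lemma \ref{lem:residue-invariance},
$$F_\ell-F_0=2m\delta\left(p_0(v_\ell-v_0)+mr_\ell+\delta Q_0\ell\right),$$
together with the fact that the bracket is divisible by $n_0$. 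It is therefore enough to show that the bracket is strictly positive in each case.

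\emph{Positive lift $\ell=1$.} Here $z_1=-m+2m=m$, so $a_1=(|m|-|-m|)/2=0$. Hence $h_1=h_0$ and $r_1=0$, and the bracket becomes $p_0(v_1-v_0)+\delta Q_0$. Since $v_1-v_0\ge -(n_0-1)$, the bracket is at least $-p_0(n_0-1)+\delta Q_0$. In the proof of Lemma \ref{lem:quotients} it was shown that $p_0(n_0-1)<Q_0/2$, and $\delta Q_0\ge Q_0$, so this lower bound is positive. Thus $F_1-F_0$ is a positive multiple of $A$, and $F_1>W$.

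\emph{Negative lift $\ell=-1$.} Here $z_{-1}=-3m$, so $a_{-1}=(3m-m)/2=m$. Also $h_{-1}\equiv h_0-m \pmod\delta$ and $r_{-1}=(m+h_{-1}-h_0)/\delta$. Since $h_{-1}-h_0\ge-(\delta-1)$, this gives $r_{-1}\ge (m-\delta+1)/\delta>0$, so $r_{-1}\ge1$ because it is an integer. (Alternatively, Lemma \ref{lem:delta-r} gives $r_{-1}\ge 2\delta Q_0$ directly.) The bracket is $p_0(v_{-1}-v_0)+mr_{-1}-\delta Q_0$, which is at least $-p_0(n_0-1)+m-\delta Q_0$. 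Using $m\ge 2Q=2\delta^2Q_0$ and $p_0(n_0-1)<Q_0/2$, this is greater than $2\delta^2Q_0-\delta Q_0-Q_0/2>0$. Hence $F_{-1}>F_0=W$.

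Combining the two cases, $W=\min\{F_{-1},F_0,F_1\}$, and Proposition \ref{prop:apery} yields the lemma. The only step needing care is the $\ell=-1$ estimate, where the standing hypothesis $m\ge2Q$ is what makes $mr_{-1}$ dominate $\delta Q_0+p_0(n_0-1)$. Everything else is bookkeeping with the lift definitions.
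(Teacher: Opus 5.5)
Your proof is correct and follows the paper's argument. Both show $F_1>F_0$ and $F_{-1}>F_0$ for the base state $(n_0-1,\delta-1,-m)$ and then use the reduction to the three central lifts (Corollary~\ref{cor:finite} and Proposition~\ref{prop:apery}). The only difference is bookkeeping: you bound the normalized bracket of Lemma~\ref{lem:residue-invariance} using $r_1=0$, the integrality bound $r_{-1}\ge 1$, and $p_0(n_0-1)<Q_0/2$, whereas the paper expands $F^*_{\pm1}-W$ in terms of $D$ and $B+C$ and appeals to the inequalities of Lemma~\ref{lem:elementary}.
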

\begin{proof}
The element \(W\) corresponds to the base state $\sigma_*=(n_0-1,\delta-1,-m).$
Let $F^*_\ell$ denote the value of its $\ell$-th lift.  Then $F^*_0=W$.

For the first positive lift, $z$ changes from $-m$ to $m$, so that $a_1=0$, and hence, it follows that $h_1=h_0=\delta-1$. Hence we get
$$
F^*_1-W=(v_1-(n_0-1))D+2mQ.
$$
Since $v_1-(n_0-1)\ge-(n_0-1)$,
$$
F^*_1-W
\ge2m\bigl(Q-p(n_0-1)\bigr)>0
$$
by \eqref{eq:aux1}.

For the first negative lift, $z$ changes from $-m$ to $-3m$.  Hence
$$
F^*_{-1}-W
=(v_{-1}-(n_0-1))D
 +(h_{-1}-(\delta-1))(B+C)+2m^3-2mQ.
$$
Using
$$
v_{-1}-(n_0-1)\ge-(n_0-1)
~~~\textrm{and}~~~
h_{-1}-(\delta-1)\ge-(\delta-1),
$$
we get
$$
F^*_{-1}-W
\ge2m\bigl(m^2-m(\delta-1)-Q-p(n_0-1)\bigr)>0
$$
by \eqref{eq:aux2}. Now, Corollary \ref{cor:finite} shows that every
other nonzero lift is even larger.  Thus $F^*_0=W$ is the orbit minimum,
and Proposition~\ref{prop:apery} gives $W\in\Ap(S,A)$.
\vskip 0.1in
This completes the proof.
\end{proof}

\begin{theorem}
\label{thm:max-apery}
Under the standing assumption $m\ge2Q$, we have $\max\Ap(S, A) = W.$
\end{theorem}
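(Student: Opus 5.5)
The plan is to combine the two facts already established: every Apéry element is an orbit minimum, and $W$ is itself an Apéry element. Take an arbitrary $w\in\Ap(S,A)$ and let $r$ be its residue modulo $A$. By Proposition~\ref{prop:apery}, there is a unique base state $\sigma_r\in\mathcal B$ with $F(\sigma_r)\equiv r\pmod A$, and
$$
w=w_r=\min_{\ell\in\mathbb Z}F_\ell(\sigma_r)=\min\{F_{-1}(\sigma_r),F_0(\sigma_r),F_1(\sigma_r)\}.
$$
In particular, $w\le F_0(\sigma_r)=F(\sigma_r)$, since the base value is one of the lift values over which the minimum is taken.

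Next I would invoke Lemma~\ref{lem:base-max}. It gives $F(\sigma_r)\le W$ for every base state, so $w\le W$. Since $r$ was arbitrary, this shows that $\max\Ap(S,A)\le W$.

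For the reverse inequality, Lemma~\ref{lem:W-apery} states that $W$ is the least element of $S$ in its residue class, so $W\in\Ap(S,A)$. Together with the upper bound, this yields $\max\Ap(S,A)=W$.

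I do not expect a genuine obstacle, because all the substantive work sits in the cited results. The one step to check is that $\min_{\ell}F_\ell(\sigma_r)\le F_0(\sigma_r)$ needs no sign or monotonicity information: $\ell=0$ is simply included in the minimum. The hypothesis $m\ge 2Q$ enters only through the lemmas already invoked. It is used in Lemma~\ref{lem:BC} for $mB>(m-1)C$, in Lemma~\ref{lem:normalization} and Corollary~\ref{cor:finite} for reducing to three lifts, and in Lemma~\ref{lem:elementary} for $W$ being an orbit minimum. If I wanted to flag a weak point, it would be the proof of Proposition~\ref{prop:apery}, which I would accept as stated.
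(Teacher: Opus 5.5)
Your proof is correct and matches the paper's argument: each Ap\'ery element is an orbit minimum, so it is at most its base value (since $\ell=0$ is included). Lemma~\ref{lem:base-max} then gives $\max\Ap(S,A)\le W$, and Lemma~\ref{lem:W-apery} supplies $W\in\Ap(S,A)$. On your flagged weak point, the written proof of Proposition~\ref{prop:apery} does contain a garbled sentence, but its logic (normalization plus Corollary~\ref{cor:orbit}) is sound, so nothing is missing.
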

\begin{proof}
For any residue class, its Ap$\acute{e}$ry representative is the minimum of the
corresponding lift orbit, and therefore is at most the base value.
By Lemma \ref{lem:base-max}, every base value is at most $W$. Hence
$$
\max\Ap(S, A)\leq W.
$$
On the other hand, Lemma \ref{lem:W-apery} gives $W\in\Ap(S, A),$ and hence, the desired equality holds.
\vskip 0.1in
This completes the proof.
\end{proof}
%\hwang{26.09.03 완료}

\begin{theorem}\label{thm:uniform-formula}
Let $m,n,p$ be positive integers, let $\delta=\gcd(n,p)$, and assume that
$$
m\ge2(n^2+p^2),~~~
\gcd(m,n^2+p^2)=1,~~\textrm{and}~~~
m+n+p\equiv1\pmod2.
$$
Then
\begin{equation}\label{eq:formula}
 g(S)=m^3+2(\delta-1)m^2-(n^2+p^2)m
 +2mp\left(\frac n\delta-1\right)-2mn.
\end{equation}
\end{theorem}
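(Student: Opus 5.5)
The formula follows from the Apéry relation \eqref{eq:apery-frob} applied with $x=A=2mn$, combined with Theorem~\ref{thm:max-apery}. The hypotheses of the statement are exactly the standing assumptions of this section: $m\ge 2Q$, $\gcd(m,Q)=1$, and $m+n+p\equiv 1\pmod 2$. Under these, $\max\Ap(S,A)=W$, where
$$
W=(\delta-1)(B+C)+(n_0-1)D+mB .
$$
Therefore $g(S)=W-A$, and what remains is to expand $W-A$ in terms of $m,n,p$ and compare with \eqref{eq:formula}.

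Before doing so, we confirm that $S$ is a numerical semigroup, so that $g(S)$ and \eqref{eq:apery-frob} make sense. By Proposition~\ref{prop:primitive}, $\gcd(A,D,B,C)=1$. Also $B=m^2-Q>0$, since $m\ge 2Q>\sqrt Q$. Hence all generators are positive and coprime.

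For the computation, substitute the definitions of the generators:
$$
B+C=2m^2,\qquad D=2mp,\qquad mB=m^3-Qm,\qquad A=2mn,\qquad n_0=\frac{n}{\delta}.
$$
This gives
$$
W-A=2(\delta-1)m^2+2mp\left(\frac{n}{\delta}-1\right)+m^3-(n^2+p^2)m-2mn,
$$
which is exactly \eqref{eq:formula} after reordering the terms. It also agrees with the form in the main theorem:
$$
m(m^2-Q)+2m\left((\delta-1)m+\frac{np}{\delta}-n-p\right).
$$

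No step here is a genuine obstacle. All the substantive work lies in Theorem~\ref{thm:max-apery}, which in turn rests on Lemma~\ref{lem:BC} (to identify the maximal base state) and Lemma~\ref{lem:W-apery} (to show $W$ is its own orbit minimum). The only point needing care is to check that the identity $B+C=2m^2$ and the substitution $n_0=n/\delta$ are applied correctly, so that the $(\delta-1)$ and $(n_0-1)$ terms appear as stated.
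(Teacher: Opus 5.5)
Your proposal is correct and follows the paper's proof exactly: you use Theorem~\ref{thm:max-apery} with \eqref{eq:apery-frob} to get $g(S)=W-A$, then expand $W$ using $B+C=2m^2$, $D=2mp$, $B=m^2-Q$ and $n_0=n/\delta$. Your extra check that $\gcd(A,B,C,D)=1$ via Proposition~\ref{prop:primitive}, so that $g(S)$ is defined, is a harmless detail the paper leaves implicit.
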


\begin{proof}
By Theorem \ref{thm:max-apery} and \eqref{eq:apery-frob},
$$
g(S)=W-A.
$$
Since
$$
B+C=2m^2,
\qquad D=2mp,
\qquad B=m^2-Q,
$$
we obtain
\begin{align*}
W & = (\delta-1)(B+C)+(n_0-1)D+mB\\
& = 2(\delta-1)m^2+2mp(n_0-1)+m(m^2-Q)\\
& = m^3+2(\delta-1)m^2-Qm+2mp(n_0-1).
\end{align*}
Subtracting $A=2mn$, and using $n_0=n/\delta$, we obtain
\eqref{eq:formula}.
\vskip 0.1in
This completes the proof.
\end{proof}

\begin{corollary}
If $\gcd(n,p) = 1$, then
$$
g(S) = m^3 - (n^2 + p^2)m + 2mp(n - 1) - 2mn.
$$
\end{corollary}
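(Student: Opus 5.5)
This is the specialization $\delta=\gcd(n,p)=1$ of Theorem \ref{thm:uniform-formula}, so I would obtain it by substitution rather than redoing any Ap\'ery set analysis. The standing hypotheses of Theorem \ref{thm:uniform-formula} are
$$
m\ge 2(n^2+p^2),\qquad \gcd(m,n^2+p^2)=1,\qquad m+n+p\equiv1\pmod 2.
$$
I take these to be implicitly in force in the corollary, since it is stated under the same standing assumptions. Under them, formula \eqref{eq:formula} is valid for every $\delta$, and in particular for $\delta=1$.

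Putting $\delta=1$ into \eqref{eq:formula} gives three simplifications:
\begin{itemize}
\item[] the term $2(\delta-1)m^2$ vanishes;
\item[] $n_0=n/\delta=n$, so $2mp\left(\frac n\delta-1\right)=2mp(n-1)$;
\item[] the remaining terms $m^3-(n^2+p^2)m-2mn$ are unchanged.
\end{itemize}
This yields $g(S)=m^3-(n^2+p^2)m+2mp(n-1)-2mn$, which is the claimed expression.

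As a consistency check, the same value comes straight from the Ap\'ery description. When $\delta=1$ the $h$-coordinate in $\mathcal B$ is forced to be $0$, so $W=(n-1)D+mB$. Then $g(S)=W-A=2mp(n-1)+m(m^2-Q)-2mn$ by Theorem \ref{thm:max-apery} and \eqref{eq:apery-frob}, matching the formula above.

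There is no real obstacle here; the only point needing care is that the hypotheses of Theorem \ref{thm:uniform-formula} are indeed assumed.
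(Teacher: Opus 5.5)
Your proposal is correct and is essentially the paper's proof: the paper also just sets $\delta=1$ in Theorem~\ref{thm:uniform-formula}, under the standing hypotheses $m\ge 2Q$, $\gcd(m,Q)=1$, $m+n+p\equiv 1\pmod 2$. Your extra consistency check via $W=(n-1)D+mB$ is also correct, though it is not needed.
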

\begin{proof}
    In this case, $\delta = 1,$ and hence, the desired result follows from Theorem \ref{thm:uniform-formula}.
\end{proof}
\begin{corollary}
Let $n = p = t$. If
$$
m\geq 4t^2,\qquad m\text{ is odd},\qquad \gcd(m, t) = 1,
$$
then
$$
g\left(2mt, m^2 - 2t^2, m^2 + 2t^2\right) = m^3 + 2(t - 1)m^2 - 2t(t + 1)m.
$$
\end{corollary}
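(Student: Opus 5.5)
The plan is to obtain this as a direct specialization of Theorem \ref{thm:uniform-formula} with $n=p=t$. Since $2mn=2mp=2mt$, the semigroup $S=\langle 2mn,2mp,m^2-Q,m^2+Q\rangle$ has a repeated generator. So $S$ coincides with $\langle 2mt,\,m^2-2t^2,\,m^2+2t^2\rangle$, and the Frobenius number on the left-hand side of the corollary is $g(S)$ for this choice of parameters.

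First I will check that the hypotheses of Theorem \ref{thm:uniform-formula} hold. Here $Q=n^2+p^2=2t^2$, so the assumption $m\ge 4t^2$ is exactly $m\ge 2Q$. Next, $\gcd(m,2t^2)=1$ because $m$ is odd and $\gcd(m,t)=1$; since no prime divides both $m$ and $t$, none divides both $m$ and $t^2$. Finally, $m+n+p=m+2t\equiv m\equiv 1\pmod 2$ because $m$ is odd. Thus all three standing conditions are satisfied, and by Proposition \ref{prop:primitive} the quadruple is primitive.

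Second, I will evaluate \eqref{eq:formula}. We have $\delta=\gcd(t,t)=t$, so $n/\delta=1$ and the term $2mp\left(\frac n\delta-1\right)$ vanishes. The formula then gives
$$
g(S)=m^3+2(t-1)m^2-2t^2m-2mt=m^3+2(t-1)m^2-2t(t+1)m,
$$
which is the claimed value.

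There is no real obstacle here. The only points needing care are noticing that the duplicated generator does not change the semigroup, and checking that $\gcd(m,Q)=1$ follows from $m$ being odd together with $\gcd(m,t)=1$.
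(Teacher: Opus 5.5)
Your proposal is correct and matches the paper's proof. Both specialize Theorem \ref{thm:uniform-formula} to $n=p=t$, using $\delta=t$ and $Q=2t^2$, so that the term $2mp\left(\frac{n}{\delta}-1\right)$ vanishes. You also spell out two points the paper leaves implicit: the checks $\gcd(m,2t^2)=1$ and $m+2t\equiv 1 \pmod 2$, and the fact that the repeated generator $2mt$ does not change the semigroup.
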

\begin{proof}
    In this case, $\delta = t=n,$ and $Q=2t^2,$ and hence, the desired result follows from Theorem \ref{thm:uniform-formula}.
\end{proof}

Now, we show that when $\delta > 1$, the same formula in Theorem \ref{thm:uniform-formula} is still valid under the slightly weaker condition that
$m\geq \frac{2Q}{\delta} - 1.$
\begin{theorem}
Let $m,n,p\in\mathbb Z_{>0}$, and let
$Q=n^{2}+p^{2}$ and $\delta=\gcd(n,p),$ for convenience. Assume that
$\gcd(m,Q)=1$ and $m+n+p\equiv 1\pmod 2.$ Now, let
$S=\left\langle 2mn,\, 2mp,\, m^{2}-Q,\, m^{2}+Q \right\rangle$ be the numerical semigroup generated by $2mn, 2mp, m^2-Q,$ and $m^2+Q.$ Suppose further that one of the following conditions holds:
\vskip 0.1in
(i) $\delta=1$ and $m \geq 2Q$.
\vskip 0.1in
(ii) $\delta>1$ and $m \geq \frac{2Q}{\delta}-1.$
\vskip 0.1in
Then we have
$$g(S)=m(m^{2}-Q)+ 2m\left((\delta-1)m+\frac{np}{\delta}-n-p\right). $$
\end{theorem}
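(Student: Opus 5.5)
For case (i) the statement is exactly Theorem \ref{thm:uniform-formula}. Indeed, expanding
\[
2m\Bigl((\delta-1)m+\tfrac{np}{\delta}-n-p\Bigr)=2(\delta-1)m^2+2mp\Bigl(\tfrac n\delta-1\Bigr)-2mn
\]
turns the claimed formula into \eqref{eq:formula}. So the work is case (ii), $\delta>1$ and $m\ge \frac{2Q}{\delta}-1$. I would follow the strategy announced in the introduction, replacing the Ap\'ery set modulo $A$ by a decomposition modulo $2m$ controlled by $K=\langle m,n,p\rangle$.

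\textbf{Step 1: describe $S$ through $K$.} Write an element of $S$ as $uA+vD+xB+yC$, put $s=\min\{x,y\}$, $z=y-x$, and set $G(z)=|z|m^2+Qz$. Then
\[
uA+vD+xB+yC=2m(un+vp+sm)+G(z),
\]
so
\[
S=\{\,2m\kappa+G(z):\kappa\in K,\ z\in\mathbb Z\,\}.
\]
The argument of Lemma \ref{lem:complete}, with $n_0=\delta=1$ there, should show that $G(z_0)$ for $-m\le z_0<m$ runs over a complete residue system modulo $2m$. Moreover $G(z_0+2m\ell)\equiv G(z_0)\pmod{2m}$.

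\textbf{Step 2: compute $g(K)$.} Since $\gcd(m,\delta)=1$, Johnson's reduction (equivalently, the residue classes of $K$ modulo $\delta$, where $x\in K$ forces $x\equiv jm\pmod\delta$ with $jm\le x$) gives
\[
g(K)=\delta\, g(\langle n_0,p_0,m\rangle)+(\delta-1)m .
\]
The hypothesis should give $m\ge n_0p_0-n_0-p_0$, so $m\in\langle n_0,p_0\rangle$ and $g(\langle n_0,p_0,m\rangle)=n_0p_0-n_0-p_0$. Hence
\[
g(K)=(\delta-1)m+\frac{np}{\delta}-n-p,
\]
and the target becomes $g(S)=mB+2m\,g(K)$.

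\textbf{Step 3: the candidate is a gap.} Let $N^*=mB+2m\,g(K)$; its class has $z_0=-m$. Any representation of $N^*$ uses some $z=-m+2m\ell$ and requires $(N^*-G(z))/2m\in K$. The cases are as follows.
\begin{itemize}
\item For $\ell=0$ the quotient is $g(K)\notin K$.
\item For $\ell=1$ we have $G(m)=mC=mB+2mQ$, so the quotient is $g(K)-Q$. Since $Q\equiv0\pmod\delta$, this lies in the class of $(\delta-1)m$ modulo $\delta$. After subtracting $(\delta-1+\delta t)m$ and dividing by $\delta$, one gets at most $n_0p_0-n_0-p_0-\delta Q_0<0$, because $n_0p_0<Q_0$. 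So $g(K)-Q\notin K$.
\item For $\ell=-1$ the quotient is $g(K)-(m^2-Q)$, which is negative once $m$ exceeds roughly $\delta$ plus a small quantity. This should follow from the hypothesis.
\item For larger $|\ell|$, $G$ grows by more than $2m\cdot g(K)$ (the analogue of Lemma \ref{lem:quotients}), so the quotient is negative.
\end{itemize}

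\textbf{Step 4: every $N>N^*$ lies in $S$.} Take $N>N^*$ with $N\equiv G(z_0)$, $-m\le z_0<m$.
\begin{itemize}
\item If $z_0\le 0$, then $G(z_0)=(-z_0)B\le mB$. Hence $\kappa=(N-G(z_0))/2m>g(K)$, so $\kappa\in K$ and $N\in S$.
\item If $z_0>0$, then $G(z_0)=z_0C$ may exceed $mB$, since Lemma \ref{lem:BC} is not available under the weaker hypothesis. I only get $\kappa>g(K)-\frac{(m-1)C-mB}{2m}$, a window of width less than $Q$ below $g(K)$.
\end{itemize}

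This is the step I expect to be the main obstacle. My plan is to show that every gap of $K$ in $(g(K)-Q,\,g(K)]$ lies in the residue class $(\delta-1)m$ modulo $\delta$, by the same structure used in Step 3. Then I would check that for $z_0>0$ the relevant $\kappa$ avoids this class modulo $\delta$, since $N\equiv N^*$ modulo $2m$ only when $z_0=-m$. Failing that, I would replace $z_0$ by its negative lift $z_0-2m$, which lowers the required $\kappa$ by a multiple of $m$ and shifts its class modulo $\delta$. The bound $m\ge\frac{2Q}{\delta}-1$ should be exactly what makes one of these two representatives land in $K$. Combining Steps 3 and 4 gives $g(S)=mB+2m\,g(K)$, which is the stated formula.
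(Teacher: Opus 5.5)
Your treatment of case (i), the description $S=\{2m\kappa+G(z):\kappa\in K,\ z\in\mathbb Z\}$, the value of $g(K)$, Step~3, and the half $z_0\le 0$ of Step~4 are sound. They follow the paper's own route: your classes $z_0\le 0$ are the paper's $0\le r\le m$, where $\Gamma_r=g(K)$, and your classes $z_0>0$ are its $r=m+s$ with $s=m-z_0$.

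However, the case $z_0>0$ is where the precise hypothesis $m\ge\frac{2Q}{\delta}-1$ is really used. There you offer a plan rather than a proof, and the plan fails as stated.
\begin{itemize}
\item \emph{The lemma you want is false.} Take $n=3$, $p=9$, $m=59$. This is admissible, with $\delta=3$, $Q=90$ and $m=\frac{2Q}{\delta}-1$. Then $K=\langle 3,59\rangle$ and $g(K)=115$. The number $56$ is a gap of $K$ with $56\equiv m\not\equiv 2m\pmod 3$. It lies even inside your sharper window $\kappa>g(K)-\frac{(m-1)C-mB}{2m}\approx 55.26$.
\item \emph{$\kappa$ cannot avoid a residue class modulo $\delta$.} As $N$ runs over the integers above $N^*$ in the class of $G(z_0)$, $\kappa=(N-z_0C)/(2m)$ takes every integer value above the threshold. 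For example, $z_0=m-1$ and $\kappa=56$ give $N=213726>N^*=213639$ with $\kappa\notin K$.
\item \emph{The negative lift does not lower $\kappa$ by a multiple of $m$.} It replaces $\kappa$ by $\kappa'=\kappa-(sm-Q)$, which is an increase when $sm<Q$. In the example $\kappa'=87\in K$, and that is what puts $N$ in $S$.
\end{itemize}

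The missing idea is a joint estimate on the two representatives. For $z_0=m-s$ with $1\le s\le m-1$, write $N=z_0C+2m\kappa=(m+s)B+2m\kappa'$, so that $\kappa=\kappa'+sm-Q$. It suffices to show that $\kappa'\notin K$ and $\kappa\notin K$ together force $N\le N^*$.
\begin{itemize}
\item Since $K=\bigsqcup_{j=0}^{\delta-1}(jm+\delta H)$ with $H=\langle n_0,p_0\rangle$, a gap of $K$ congruent to $jm$ modulo $\delta$ is at most $g(K)-(\delta-1-j)m$.
\item Since $\delta\mid Q$, passing from $\kappa'$ to $\kappa$ moves the class from $j$ to $j+s$ modulo $\delta$.
\item Comparing the two bounds gives $\kappa'\le g(K)-\min\{sm,\delta m-Q\}$ for $1\le s<\delta$. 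From $\kappa\le g(K)$ you also get $\kappa'\le g(K)-(sm-Q)$ for every $s$.
\item With these bounds, $N\le N^*$ reduces to $2m\min\{sm,\delta m-Q\}\ge sB$ when $s<\delta$, and to $s(m^2+Q)\ge 2mQ$ when $s\ge\delta$.
\item These follow from $(\delta+1)m^2-2mQ+(\delta-1)Q\ge 0$ and $\delta(m^2+Q)\ge 2mQ$. This is exactly where $m\ge\frac{2Q}{\delta}-1$ enters.
\end{itemize}
Without this step, the upper bound $g(S)\le N^*$ in case (ii) is not established.
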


\begin{proof}
Since $\delta=\gcd(n,p),$ we may write $n=\delta u$ and $p=\delta v$ for some relatively prime integers $u,v.$ Let $H=\langle u,v \rangle$ and $K=\langle m,n,p \rangle =\langle m, \delta u, \delta v \rangle.$
Since $Q=n^2 + p^2 = \delta^2 (u^2 +v^2)$ and $\gcd(m,Q)=1,$ we also have $\gcd(m,\delta)=1.$
\vskip 0.1in
Now, We first show that $m \in H.$ 
\vskip 0.1in
(Case (i)) If $\delta=1,$ then by a well-known fact, $g(H)=uv-u-v = np-n-p < Q.$ Since $m \geq 2q,$ it follows that $m>g(H),$ and hence, $m \in H.$
\vskip 0.1in
(Case (ii)) If $\delta>1,$ then since $g(H)=uv-u-v < u^2 + v^2$ and $m \geq \frac{2q}{\delta}-1 = 2d(u^2 +v^2 )-1  \geq u^2+v^2$, we also have $m > g(H)$ and $m \in H.$
\vskip 0.1in
Now, since $m \in H$ and $\gcd(m,\delta)=1,$ we can write 
$$K=\bigsqcup_{j=0}^{\delta-1}
\left(jm+\delta \cdot H\right)$$
where the union is a disjoint union. It follows that 
$$ g(K)=(\delta-1)m + \delta \cdot g(H)  = (\delta-1)m+\delta(uv-u-v) =(\delta-1)m+\frac{np}{\delta}-n-p. $$
Since either $m \geq 2Q$ (in Case (i)) or $m \geq \frac{2Q}{\delta}-1$ (in Case (ii)), we have $m^2-Q> g(K)$ so that $m^2 - Q \in K$, and hence, we get $K=\langle m,n,p,m^2-Q \rangle.$ Also, since $m+n+p$ is odd, $m^2-Q$ is odd, and we have
$$\gcd(m,m^2 -Q)=\gcd(m,Q)=1,$$
and it follows that $\gcd(m^2 -Q, 2m)=1.$ 
\vskip 0.1in
Now, for $0 \leq r < 2m,$ let 
$\Gamma_r = \max\{t \in \mathbb{Z}~|~t \not \in K~\textrm{and}~t-(m^2+Q-rm) \not \in K \}.$ Then we get that
$$S \cap (r(m^2-Q)) = \{r (m^2 -Q) + 2m t~|~ t \in K \cup (m^2+Q-rm+K)\}.$$
Hence it follows that $g(S)=\max_{0 \leq r < 2m} \{r(m^2-Q)+2m \cdot \Gamma_r \}.$ Now, there are two cases to consider:
\vskip 0.1in
(Case 1): If $0 \leq r \leq m,$ then since $Q=n^2 + p^2 \in K,$ we get that 
$$m^2 +Q - rm = m(m-r)+Q \in K$$
and in this case, we obtain that $\Gamma_r = g(K).$ (Thus, the largest contribution from (Case 1) is $m(m^2 -Q) + 2m \cdot g(K).$)
\vskip 0.1in
(Case 2): If $m+1 \leq r <2m,$ then write $r=m+s$ for some $1 \leq s <m,$ and then, we have
$$m^2 + Q - rm = m^2 +Q - (m+s)m = Q-sm.$$
Now, if $\delta=1$ and $m \geq 2Q,$ then we have $\Gamma_r = \Gamma_{m+s}\leq g(K)-(sm-Q)$ and 
$$2m(sm-Q)-s(m^2-Q) = s(m^2+Q) -2mQ \geq m^2 +Q - 2mQ  \geq 0.$$
Thus we see that 
$$ r(m^2 -Q) + 2m \cdot \Gamma_{r} = (m+s)(m^2 -Q) + 2m \cdot \Gamma_{m+s} \leq m (m^2 -Q) + 2m \cdot g(K).$$
Now, suppose that $\delta>1$ and $m \geq \frac{2q}{\delta}-1.$ For $1 \leq s <\delta,$ we have
$\Gamma_r = \Gamma_{m+s} \leq g(K)-\min \{sm, \delta m-Q \},$ and for $\delta \leq s \leq m-1,$ we have $\Gamma_r = \Gamma_{m+s} \leq g(K)-(sm-Q).$ In the first case, we clearly have $2m\cdot sm \geq s(m^2-Q)$ and then since $m \geq \frac{2Q}{\delta}-1,$ we also have
$$2m(\delta m-Q)-(\delta-1)(m^2-Q) = (\delta+1)m^2 -2mQ + (\delta-1)Q \geq 0.$$
Then since $1 \leq s \leq \delta-1,$ it follows that 
$$2m \cdot \min \{sm, \delta m-Q \} \geq s (m^2 -Q).$$
In the second case, since $\delta \leq s $ and $m \geq \frac{2Q}{\delta}-1$, we have 
$$2m(sm-Q)-s(m^2-Q) = s(m^2 +Q)-2mQ \geq \delta (m^2 +Q)-2mQ  \geq 0.$$
Therefore, for any $1 \leq s \leq m-1,$ we have
$$(m+s)(m^2-Q)+2m \cdot \Gamma_{m+s} \leq m(m^2-Q)+2m \cdot g(K),$$
and hence, we can conclude that $g(S)=m(m^2-Q)+2m \cdot g(K).$ Then since $g(K)=(\delta-1)m+\frac{np}{\delta}-n-p,$ we finally have
$$g(S)=m(m^2-Q) +2m \left((\delta-1)m + \frac{np}{\delta}-n-p \right).$$

This completes the proof.
\end{proof}

\end{document}